\newcommand{\SR}{\mathcal{S}(\mathbb{R})}
\newcommand{\I}{\mathcal{I}}
\newcommand{\F}{\mathcal{F}}
\newcommand{\R}{\mathbb{R}}
\newtheorem{proposition}{Proposition}
\newtheorem{remark}{Remark}
\title{Simultaneous denoising and enhancement of  signals by a fractal conservation law}
\author{Pascal Azerad, Afaf Bouharguane   and Jean-Fran\c cois Crouzet \footnote{Institut de Math\'ematiques et Mod\'elisation de Montpellier, UMR 5149 CNRS, Universit\'e Montpellier 2, Place Eug\`ene Bataillon, CC 051
34095 Montpellier, France. Email:\, {\sffamily azerad@math.univ-montp2.fr},\, {\sffamily bouharg@math.univ-montp2.fr},\,
{\sffamily crouzet@math.univ-montp2.fr}} 
}
\date{\today}
\begin{document}
\maketitle

\begin{abstract}
In this paper, a new filtering method is presented for simultaneous noise reduction and enhancement of signals using a fractal scalar conservation law which is simply the forward heat equation modified by a fractional anti-diffusive term of lower order.
This kind of equation has been first introduced by physicists to describe morphodynamics of sand dunes.
To evaluate the performance of this new filter, we perform a number of numerical tests on various signals. Numerical simulations are based on finite difference schemes or Fast Fourier Transform.
We used two well-known measuring metrics in signal processing for the comparison.
The results indicate that the proposed method outperforms the well-known Savitzky-Golay filter in signal denoising.
Interesting  multi-scale properties w.r.t. signal frequencies are exhibited allowing  to control both denoising and contrast enhancement. 
\end{abstract}


\bigskip
\noindent \textbf{Keywords:} fractal operator, fractional calculus, Fourier transform, PDEs filters, denoising, enhancement, fast Fourier transform (FFT), finite difference scheme, Savitzky-Golay filter.

\bigskip
\noindent \textbf{AMS Subject Classifications:} 35R11; 60G35; 26A33.

\section{Introduction}

Filtering is a process that removes some unwanted component from a signal. It is a very important task in signal processing, data analysis and communication systems. Many techniques have been proposed for this purpose. For instance, we can use simple  averaging filters such  as a Gaussian filter. It is well-known that this filter can be realized by solving the heat equation. Other more general partial differential equations (PDE) have been used, with  non-linear anisotropic diffusion \cite{catte,perona},  non-linear fractional diffusion \cite{bai,patrick} or fractional time-derivative \cite{kirane}. It has been proved that PDEs are suitable in signal denoising \cite{nader}. The denoising applications have to take into account two points. First, we want to obtain a clean and readily observable signal (improving signal-to-noise ratio SNR) and secondly, preserve  the original shape characteristics (maxima, minima...) of the signal. This task is complex because it is very important that the denoising has no blurring effect on the images and does not change the location of image edges,  \cite{buades}.
For some applications, it is interesting to amplify some features of the signal such as relative maxima or minima in order to enhance its contrast. Usually, these features are flattened by the denoising methods based on averaging techniques.  
Among the  denoising methods  which preserve characteristics of the initial signal is the Savitzky-Golay filter, with which we will compare our new method. 

A basic and crude idea to enhance the contrast could be to solve the backward heat equation for a few time steps. Of course, this is an ill-posed PDE and we do not advocate this unsafe method but it illustrates the fact that enhancement and denoising are antagonistic operations. 
Our method is based on a linear PDE, with two antagonistic terms : a usual diffusion and a nonlocal fractional anti-diffusive term of lower order.
The diffusion is used to reduce the noise whereas the nonlocal anti-diffusion is used to enhance the contrast.  Let us emphasize that we perform 
\emph{at the same time} noise reduction and contrast enhancement.
Our  method is based on  the Cauchy problem of the following PDE :
\begin{equation}
\begin{cases}
\partial_t u(t,x) - a \, \partial_{xx}^2 u(t,x) + b \, \I_\lambda [u(t,.)] (x)
 = 0 & t \in (0,T), x \in \mathbb{R}, \\
u(0,x)=u_0(x) & x \in \mathbb{R},
\end{cases}
\label{EDP}
\end{equation}
where $T$ is any given positive time, $u_0 \in L^2(\R)$, $a, \, b$ are positive constants and $\I_\lambda$
is a fractional operator defined as follows via the Fourier transform:
for any Schwartz function $\varphi \in \SR$
\begin{equation}
\I_\lambda [\varphi](x) := - \, \F^{-1}(|.|^{\lambda} \F(\varphi))(x)\,
\end{equation}
where $0 < \lambda < 2$ and 
$\F$ denote the Fourier transform defined by: for all $\xi \in \R$
\begin{equation*}
\F f (\xi):= \int_{\R} e^{-2i\pi x \xi} f(x) dx.
\end{equation*}

Let us note that for $\lambda \in ]1,2[$, we have an explicit nonlocal formula (see proposition \ref{khinchine})

\begin{equation}
\I_\lambda [\varphi](x) = \alpha_\lambda \int_\R \frac{\varphi^{''}(x-\xi)}{|\xi|^{\lambda-1}} d\xi
\end{equation}
where $\alpha_\lambda$ is a suitable constant. \\
Alternatively, we can also give a slightly different definition, inspired by fractional calculus (see remark below) :
\begin{equation}
\I_\lambda [\varphi](x) = \alpha_\lambda \int_0^{+\infty} \frac{\varphi^{''}(x-\xi)}{|\xi|^{\lambda-1}} d\xi.
\label{liouville}
\end{equation}

\begin{remark}
For causal signals (i.e $\varphi(x)=0$ for $x<0$) we have

\begin{equation}
\frac{1}{\Gamma(2-\lambda)} \int_0^{+\infty} \frac{\varphi^{''}(x-\xi)}{|\xi|^{\lambda-1}} d\xi= \frac{d^{\lambda - 2}}{d x^{\lambda -2}} \varphi'' = 
\frac{d^{\lambda}}{d x^{\lambda}} \varphi
\end{equation}
 which is exactly the Riemann Liouville definition of the fractional derivative \cite{pod}.
\end{remark} 
 
\begin{remark}
Our model is closely related to a nonlocal conservation law first introduced  to describe the morphodynamics of sand dunes  and ripples  sheared by  a fluid flow.  
Namely, Fowler (\cite{fowler2,fowler3,fowler1}) introduced the following equation \label{fowler1}
\begin{equation}
\partial_t u(t,x) + \partial_x\left(\frac{u^2}{2}\right) (t,x) + \I [u(t,.)] (x)
- \partial_{xx}^2 u(t,x) = 0, 
\label{fowlereqn}
\end{equation}
where $u=u(t,x)$ represents the dune height and $\I$
is a nonlocal operator defined as follows: for any Schwartz
function $\varphi \in \SR$ and any $x \in \mathbb{R}$,
\begin{equation}
\I [\varphi] (x) := \int_{0}^{+\infty} |\zeta|^{-\frac{1}{3}}
\varphi''(x-\zeta) d\zeta . \label{nonlocalterm}
\end{equation}
Equation \eqref{fowlereqn} is valid for a river flow over a erodible bottom $u(t,x)$ with slow variation. The nonlocal term appears after a subtle modeling of the basal shear stress. See \cite{alibaud,alvarez} for theoretical results on this equation.\\
This nonlocal term appears also in the work of Kouakou \& Lagree \cite{kouakou,lagree}.
The operator $\I[u]$ is a weighted mean of second
derivatives of $u$ with the bad sign and has therefore an anti-diffusive effect and create instabilities which
are controlled by the diffusive operator $-\partial_{xx}^2$. We find again this phenomenon for the equation \eqref{EDP}.
 
\end{remark}

The solution of the linear PDE \eqref{EDP} is obtained by convolution with the kernel $K$ of $\I_\lambda-\partial_{xx}^2$. Thereafter, this kernel will be our filter for denoising and enhancement of signals. The analysis of this kernel shows that the low frequencies are preserved, the medium frequencies are amplified and the high frequencies are eliminated. 
It is clear that this kernel depends on the parameters $a, b, \lambda$ and that the choice of these coefficients will determine the quality of the noise reduction and of the enhancement. In this paper, we discuss the choice of these parameters. \\ 
To evaluate the performance of our filter,  we discretize the PDE  by two methods : the fast Fourier transform and the finite difference method. 
Numerical studies of  nonlocal equations are rather scarce: among them, we mention \cite{JD2} which proves the convergence of a finite volume method to approximate the solutions of a fractal scalar conservation law, that is to say a conservation law regularized  by a  \emph{diffusive} fractional power of the Laplace operator and \cite{afaf}  which analyzes the stability  of  finite difference schemes for the solution of \eqref{fowlereqn}. 
In these works, the discretization of the fractal operator is performed  via an integral formula for $\I_\lambda$ similar to the ones appearing in \cite{alibaud, JD1}.
Our method is then compared with the Savitzky-Golay filter. Results show that the PDE \eqref{EDP} is relevant and effective for denoising with preservation or enhancement of features of the signal. \\

The remaining of this paper is organized as follows: in the next section, we explicit the solution of problem \eqref{EDP} and we give some properties of the kernel.    
In section \ref{DF}, we present explicit numerical schemes which approximate the fractal conservation law \eqref{EDP} and we give some numerical simulations. In this section, we also briefly present the Savitzky-Golay filter and compare it with our filter.
Other numerical simulations based on the FFT are given in section \ref{fft}. We also discuss the choice of parameters $a,b, \lambda$ and we highlight both the ability of denoising and contrast enhancement of our model. 
Section \ref{metrique} is devoted to the performance evaluation and metrics. Section \ref{conclusion} gives conclusions about this study.

\section{Theoretical study of the PDEs}

In this section, we verify that \eqref{EDP} is well-posed and in subsection \ref{sectionkernel} we analyze the properties of the kernel $K$ of $\I_\lambda-\partial_{xx}^{2}$, for $0< \lambda <2$.

\subsection{Well-posedness of the problem} 

Using the Fourier transform, we see that any solution to \eqref{EDP} satisfies the formula \eqref{duhamel}. 

\begin{proposition} 
Let $T>0$ and $u_{0} \in L^{2}(\R)$. The function $u \in L^\infty((0,T);L^2(\R)) $ is a solution of \eqref{EDP} if for any $t \in \left(0,T\right)$: 
\begin{equation}
u(t,x)=K(t,\cdot)\ast u_{0}(x)
\label{duhamel}
\end{equation}
where $K(t,x)=\F^{-1}\left(e^{-t \psi(\cdot)}\right)(x)$  with $\psi (\xi):= 4 \pi^2 a \xi^2 -b \, |\xi|^{\lambda} $
is the kernel of the operator $\I_\lambda-\partial^2_{xx}$. 
\end{proposition}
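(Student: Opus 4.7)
The plan is to reduce \eqref{EDP} to a one-parameter family of linear scalar ODEs by applying the spatial Fourier transform, solve them in closed form, and recover $u$ via Fourier inversion and the convolution theorem. First I would take $\F$ in $x$ of both sides of \eqref{EDP}. Using $\widehat{\partial_{xx}^{2}u}(\xi) = -4\pi^{2}\xi^{2}\hat u(\xi)$ together with the definition $\widehat{\I_\lambda[u]}(\xi) = -|\xi|^{\lambda}\hat u(\xi)$, the equation becomes
\begin{equation*}
\partial_{t}\hat u(t,\xi) + \psi(\xi)\,\hat u(t,\xi) = 0, \qquad \hat u(0,\xi) = \hat u_{0}(\xi),
\end{equation*}
with $\psi(\xi) = 4\pi^{2}a\xi^{2} - b|\xi|^{\lambda}$ exactly as in the statement. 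Integrating in $t$ yields $\hat u(t,\xi) = e^{-t\psi(\xi)}\hat u_{0}(\xi)$, and undoing the Fourier transform together with the convolution theorem gives $u(t,\cdot) = K(t,\cdot) \ast u_{0}$.

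Next I would check that the candidate $u$ indeed lies in $L^{\infty}((0,T);L^{2}(\R))$. The key point is that $\psi$ is continuous on $\R$ with $\psi(0)=0$ and, since $\lambda<2$, $\psi(\xi)\to+\infty$ as $|\xi|\to\infty$, so $\psi$ is bounded below on $\R$. Setting $M := -\inf_{\xi\in\R}\psi(\xi) < +\infty$ gives the uniform bound $e^{-t\psi(\xi)} \le e^{tM}$ for $\xi\in\R$ and $t\in[0,T]$, and Plancherel's identity then yields
\begin{equation*}
\|u(t,\cdot)\|_{L^{2}} = \|e^{-t\psi}\hat u_{0}\|_{L^{2}} \le e^{TM}\|u_{0}\|_{L^{2}},
\end{equation*}
which delivers the required $L^{\infty}_{t}L^{2}_{x}$ bound and, in particular, shows that convolution with $K(t,\cdot)$ is a well-defined operation on $L^{2}(\R)$.

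To close the equivalence suggested by the sentence preceding the proposition, I would also justify that $u$ defined by \eqref{duhamel} solves \eqref{EDP} in the distributional sense and is the unique such $L^{\infty}_{t}L^{2}_{x}$ solution. For existence, $\psi(\xi) e^{-t\psi(\xi)}$ is bounded on $\R$ for each $t>0$ (combining the exponential bound with the $\xi^{2}$-growth of $\psi$), hence differentiation under $\F^{-1}$ is legal and the scalar ODE transfers back to \eqref{EDP}. For uniqueness, if $u$ is any such solution with $u_{0}=0$, the same Fourier argument forces $\hat u(\cdot,\xi)$ to solve the scalar ODE with zero datum for almost every $\xi$, hence $u \equiv 0$.

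The main subtlety, where I would spend the most care, is the rigorous justification of taking the Fourier transform of \eqref{EDP} when $u$ is only in $L^{\infty}_{t}L^{2}_{x}$: the symbol identities above hold a priori in $\mathcal{S}'(\R)$, and one needs to argue that the scalar ODE is satisfied in $\mathcal{D}'(0,T)$ for almost every $\xi$. The clean way is to pair \eqref{EDP} against separated test functions $\phi(t)\F^{-1}\chi(x)$ with $\phi \in C^{\infty}_{c}(0,T)$ and $\chi\in\SR$, and invoke Fubini to transfer the action of $\I_{\lambda} - \partial_{xx}^{2}$ onto $\chi$. Once this reduction is in place, the closed-form Duhamel formula \eqref{duhamel} drops out immediately.
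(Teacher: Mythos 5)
Your proposal is correct and follows essentially the same route the paper intends: the paper justifies the proposition only by the remark that ``using the Fourier transform'' any solution satisfies \eqref{duhamel}, i.e.\ the symbol of $\I_\lambda-\partial_{xx}^2$ turns \eqref{EDP} into the scalar ODE $\partial_t\hat u+\psi\,\hat u=0$, solved by $e^{-t\psi}\hat u_0$ and inverted via the convolution theorem. Your additional care (the lower bound on $\psi$ giving the $L^\infty_t L^2_x$ estimate, and the distributional justification) only fleshes out what the paper leaves implicit, and the bound $\min\psi>-\infty$ is exactly what the paper itself invokes in the proof of the following proposition.
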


\begin{proposition}
Let $T>0$, $u_{0} \in L^{2}(\R)$. Then, the function 
\begin{equation*}
u: t \in (0,T] \rightarrow K(t,\cdot) \ast u_0 
\end{equation*}
is well-defined and belongs to $C([0,T];L^2(\R))$: $u$ is extended at $t=0$ by the value $u(0,.)=u_0$. 
\end{proposition}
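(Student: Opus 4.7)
The plan is to work on the Fourier side using Plancherel's theorem, since the convolution with $K(t,\cdot)$ becomes multiplication by the symbol $e^{-t\psi(\xi)}$. The main point is to show that this symbol is uniformly bounded on $[0,T]\times\R$, which is where the restriction $\lambda<2$ becomes essential.

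First I would establish a lower bound on $\psi$. Since $\psi(\xi)=4\pi^{2}a\xi^{2}-b|\xi|^{\lambda}$ with $0<\lambda<2$, the quadratic term dominates the fractional term at infinity while both vanish at $\xi=0$. Elementary calculus (finding the critical point of $\xi\mapsto b|\xi|^{\lambda}-4\pi^{2}a\xi^{2}$) yields a finite constant $C_{0}\geq 0$ such that $\psi(\xi)\geq -C_{0}$ for all $\xi\in\R$. Consequently, for every $t\in[0,T]$ and every $\xi\in\R$,
\begin{equation*}
0<e^{-t\psi(\xi)}\leq e^{TC_{0}}.
\end{equation*}

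Next I would verify that $u(t,\cdot)$ is a well-defined element of $L^{2}(\R)$ for each $t\in[0,T]$. By Plancherel,
\begin{equation*}
\|u(t,\cdot)\|_{L^{2}}=\|e^{-t\psi}\widehat{u_{0}}\|_{L^{2}}\leq e^{TC_{0}}\|u_{0}\|_{L^{2}},
\end{equation*}
so the convolution is meaningful (in the sense $u(t,\cdot):=\F^{-1}(e^{-t\psi}\widehat{u_{0}})$) and uniformly bounded in $t$.

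For the continuity statement, I would fix $t\in[0,T]$ and a sequence $t_{n}\to t$ in $[0,T]$. Again by Plancherel,
\begin{equation*}
\|u(t_{n},\cdot)-u(t,\cdot)\|_{L^{2}}^{2}=\int_{\R}\bigl|e^{-t_{n}\psi(\xi)}-e^{-t\psi(\xi)}\bigr|^{2}|\widehat{u_{0}}(\xi)|^{2}\,d\xi.
\end{equation*}
The integrand tends to $0$ pointwise in $\xi$ by continuity of the exponential, and is dominated by $4e^{2TC_{0}}|\widehat{u_{0}}(\xi)|^{2}$, which belongs to $L^{1}(\R)$ since $u_{0}\in L^{2}(\R)$. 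Lebesgue's dominated convergence theorem then gives $\|u(t_{n},\cdot)-u(t,\cdot)\|_{L^{2}}\to 0$, which proves $u\in C([0,T];L^{2}(\R))$. The extension at $t=0$ is automatic: $e^{-0\cdot\psi(\xi)}=1$, so $\F(u(0,\cdot))=\widehat{u_{0}}$ and hence $u(0,\cdot)=u_{0}$ in $L^{2}$, and the above continuity argument applied at $t=0$ shows the limit $u(t,\cdot)\to u_{0}$ as $t\to 0^{+}$.

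The only real obstacle is the lower bound on $\psi$: the antidiffusive term $-b|\xi|^{\lambda}$ makes $\psi$ negative on a neighbourhood of zero, so one must check that it cannot run off to $-\infty$. This is exactly where the hypothesis $\lambda<2$ is used, and once that uniform bound is in hand the rest of the argument is a routine application of Plancherel plus dominated convergence.
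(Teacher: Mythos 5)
Your proof is correct, and its core -- the continuity argument via Plancherel, pointwise convergence of the symbol, and dominated convergence using the finiteness of $\min\psi$ -- is exactly the paper's argument (the paper invokes the same bound, phrased as ``$\min\psi$ is finite''); in fact your version is slightly more complete, since you run the same estimate at every $t\in[0,T]$ with the uniform domination $4e^{2TC_0}|\F u_0|^2$, whereas the paper only writes out the limit at $t=0$. Where you genuinely diverge is the well-definedness step: the paper proves that $h(\xi)=e^{-t\psi(\xi)}$ lies in $W^{2,1}(\R)$, deduces $K(t,\cdot)\in L^1(\R)$, and then gets $K(t,\cdot)\ast u_0\in L^2(\R)$ from Young's inequality, so the convolution in the statement is literally an integral of an $L^1$ kernel against an $L^2$ function and the identity $\F(K(t,\cdot)\ast u_0)=e^{-t\psi}\,\F u_0$ is immediate; you instead stay entirely on the Fourier side, using the uniform bound $e^{-t\psi}\le e^{TC_0}$ to define $u(t,\cdot)=\F^{-1}(e^{-t\psi}\F u_0)$, which buys you the clean uniform estimate $\|u(t,\cdot)\|_{L^2}\le e^{TC_0}\|u_0\|_{L^2}$ but quietly replaces the convolution of the statement by the multiplier operator. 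That replacement is harmless here (for $t>0$ one has $e^{-t\psi}\in L^1\cap L^2$, so $K(t,\cdot)\in L^2$ and the two definitions agree), but if you want to claim the convolution itself is well-defined you should say one sentence about integrability of $K(t,\cdot)$ -- which is precisely what the paper's $W^{2,1}$ step supplies.
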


\begin{proof}

We have
$$K(t,x) = \int_{\R} e^{2i\pi x \xi} \,h(\xi)\, d\xi,
$$
where $h(\xi) = e^{-t \psi(\xi)} $. For $0< \lambda <2$, it is easy to verify that  $h$  belongs to $W^{2,1}(\R)$
where $W^{2,1}(\R):=\left\{v \in L^{1}(\R); \frac{\partial v}{\partial x}, \frac{\partial^2 v}{\partial x^2} \in L^{1}(\R) \right\}$ therefore we have that  $K(t,.) \in L^{1}(\R)$. Hence, $\forall t>0$, $K(t,\cdot)*u_0$ is in $L^{2}(\R)$. 

Let us prove the strong continuity ie
\begin{equation*}
\lim_{t \to 0} K\left(t,.\right)*u_{0}=u_{0} \hspace{0.2 cm} in \hspace{0.2 cm} L^2\left(\R\right).
\end{equation*}
By Plancherel's formula,
\begin{multline} \label{ref strong continuity}
||K(t,\cdot) \ast u_0 -u_0||^2_{L^2(\R)}=||\F (K(t,\cdot) \ast u_0) -\F u_0||^2_{L^2(\R)}\\
=||e^{-t \psi} \F u_0 -\F u_0||^2_{L^2(\R)}=
\int_\R |e^{-t \psi}-1|^2\, |\F u_0|^2.
\end{multline}
Since the function $|e^{-t \psi}-1|^2\, |\F u_0|^2 $ converges
pointwise to $0$ on $\R$, as $t \rightarrow 0$ and as $\min \psi$ is finite then, by the dominated
convergence theorem, the last term of \eqref{ref
strong continuity} tends to $0$ as $t \rightarrow 0$. 

\end{proof}
\begin{remark}[Regularity of the solution] 
It is easy to see that $u \in C^{\infty}( (0,T] \times \R)$ because $K $ is smooth. The smoothness of $K$ is an immediate consequence of the theorem of derivation under the integral sign applied to the definition of $K$ by Fourier transform. To obtain the regularity at $t=0$, we have to suppose that the initial condition $u_0$ is in $C^{\infty}(\R)$ and satisfies for all $k \in \mathbb{N}$, $u_0^{(k)} \in L^{2}(\R)$. The proof is similar with replacing $u_0$ by $u_0^{(k)}$.   
\end{remark}

\begin{remark}
From formula \eqref{duhamel}, it is straightforward that
\begin{equation}
\int_{\R} u(t, x) \, dx = \F \left(K(t,\cdot)\right) (0) \cdot \int_{\R} u_0 (x) \, dx = \int_{\R} u_0 (x) \, dx.
\label{conserve}
\end{equation}
This is a ``mass conservation'' property. 
\end{remark}

\subsection{Study of the kernel \label{sectionkernel}}

In this subsection, we give some properties on the kernel $K$ of $ \I_{\lambda}-\partial_{xx}^2$.

\begin{proposition}
The kernel $K$ has a non-zero negative part.
\end{proposition}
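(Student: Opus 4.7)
The plan is to argue by contradiction, exploiting the elementary fact that for a nonnegative integrable function $f$ one has $|\widehat{f}(\xi)| \le \widehat{f}(0)$ for all $\xi$. The whole point is that the anti-diffusive symbol $-b|\xi|^\lambda$ dominates the diffusive symbol $4\pi^2 a\xi^2$ near the origin, which forces $\widehat{K}(t,\cdot)$ to exceed its value at $0$, ruling out nonnegativity of $K$.

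First I would record that $\psi(0)=0$, so $\widehat{K}(t,0) = e^{-t\psi(0)} = 1$, and in particular $\int_\R K(t,x)\,dx = 1$ (this is already noted in \eqref{conserve}). Next, I would analyze the sign of $\psi$ near the origin by factoring
\begin{equation*}
\psi(\xi) \;=\; |\xi|^\lambda \bigl(4\pi^2 a\, |\xi|^{2-\lambda} - b\bigr).
\end{equation*}
Since $0<\lambda<2$, the bracket tends to $-b<0$ as $\xi\to 0$, so there exists $\delta>0$ with $\psi(\xi)<0$ for every $0<|\xi|<\delta$. Consequently, for any such $\xi$,
\begin{equation*}
\widehat{K}(t,\xi) \;=\; e^{-t\psi(\xi)} \;>\; 1 \;=\; \widehat{K}(t,0).
\end{equation*}

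Now suppose, for contradiction, that $K(t,\cdot)\ge 0$ almost everywhere on $\R$. Then for every $\xi\in\R$,
\begin{equation*}
\bigl|\widehat{K}(t,\xi)\bigr| \;=\; \Bigl|\int_\R e^{-2i\pi x\xi} K(t,x)\,dx\Bigr| \;\le\; \int_\R K(t,x)\,dx \;=\; \widehat{K}(t,0) \;=\; 1,
\end{equation*}
contradicting the previous strict inequality. Hence the set $\{K(t,\cdot)<0\}$ has positive Lebesgue measure, i.e. the negative part $K^{-}(t,\cdot)=\max(-K(t,\cdot),0)$ is not identically zero.

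I do not foresee any real obstacle here: the argument is short and purely Fourier-analytic. The only slightly delicate point is the a.e.\ nonnegativity assumption, but since $K(t,\cdot)\in L^1(\R)$ (already shown in the preceding proposition via $h\in W^{2,1}(\R)$), the bound $|\widehat{K}(t,\xi)|\le \int K(t,x)\,dx$ applies without further justification, and the factorization of $\psi$ makes the behavior near zero transparent.
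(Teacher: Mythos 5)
Your proof is correct and follows essentially the same route as the paper: argue by contradiction using the bound $|\widehat{K}(t,\xi)|\le \int_\R K(t,x)\,dx = \widehat{K}(t,0)=1$ valid for nonnegative integrable $K$, and contradict it with $e^{-t\psi(\xi)}>1$ for small nonzero $|\xi|$, where $\psi<0$ (the paper even writes the same threshold $|\xi|<(b/(4\pi^2a))^{1/(2-\lambda)}$ that your factorization produces).
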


\begin{proof}
Let us assume that $K$ is nonnegative, then
\begin{eqnarray*}
| e^{-t \psi(\xi)} | &\leq& || \F^{-1}(e^{-t \psi})||_{L^{1}(\R)}=\int_\R |K(t,.)| \\
&=& \int_\R K(t,.)= \F(K(t,.))(0)=e^{-t \psi (0)}=1   
\end{eqnarray*}
for all $\xi \in \R$.
But we have for $0< |\xi| < \left(\frac{b}{4 \pi^2 a}\right)^{\frac{1}{2-\lambda}}$, $|e^{-t \psi(\xi)}|=e^{-t(4 \pi^2 a \xi^2-b|\xi|^\lambda)}>1$ ,
this yields a contradiction.
\end{proof}
The main consequence of the non-positivity of $K$ is the failure of the maximum principle for the equation \eqref{EDP} \cite{alibaud}. Thereby,
$u (x, t)$ is not forced to remain in the interval $[\min (u_0), \max (u_0)]$. 
The signal can then be amplified when it is necessary. \\
Using the proof of the previous proposition, we can deduce that the enhancement of the frequencies will be feasible only for the low/medium frequencies. Of course, this amplification will depend on the choice of the parameters $a, b, \lambda$ and of the time $t$.
\begin{figure}[h!]
	\centering
        \subfigure[ The kernel of $\I_\lambda$ for $t=0.1$ (red) and $t=0.5$ s (blue) ]
	{\includegraphics[scale=0.5]{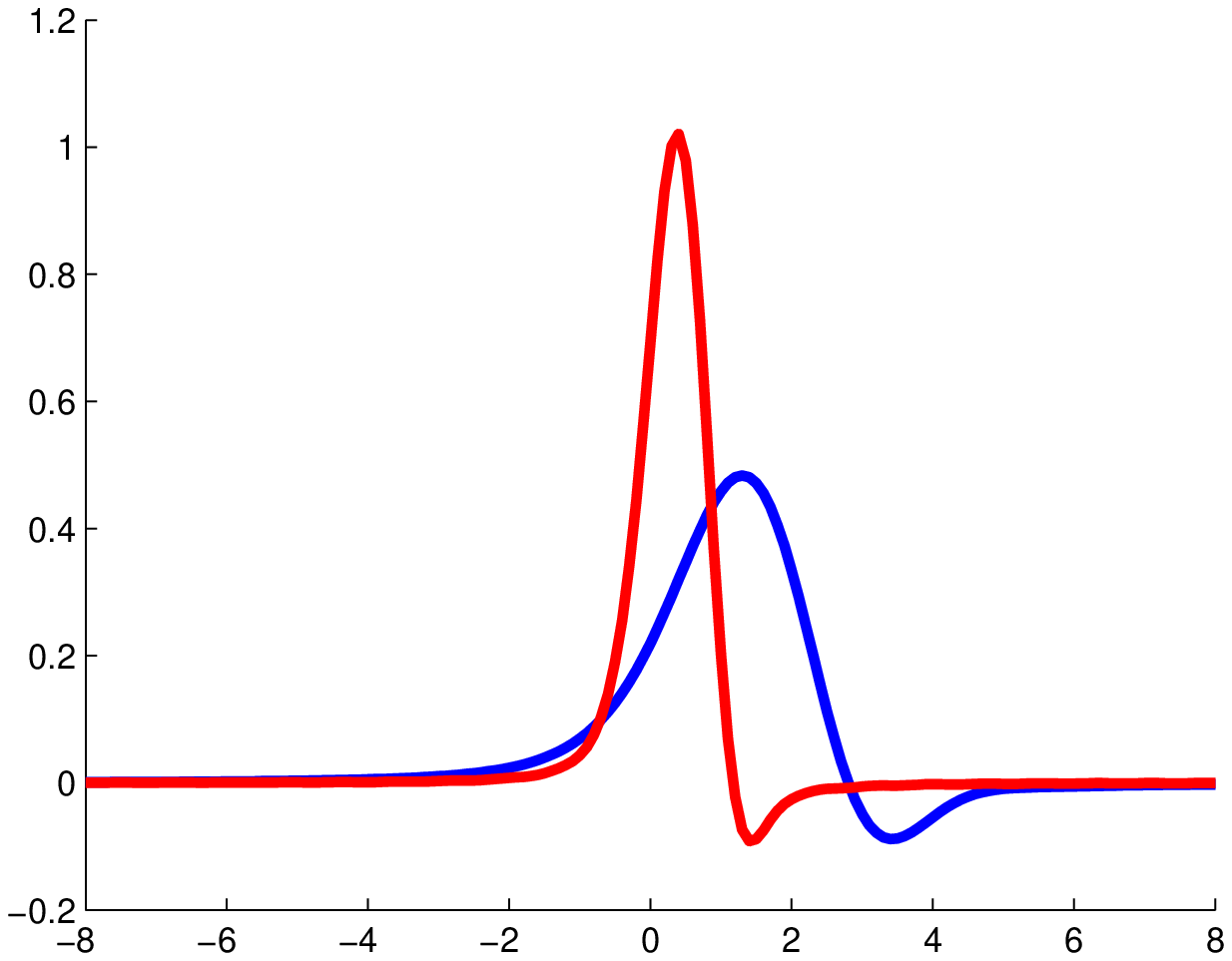} }
	\subfigure[ Behaviour of $\psi$  ]
	{\includegraphics[scale=0.5]{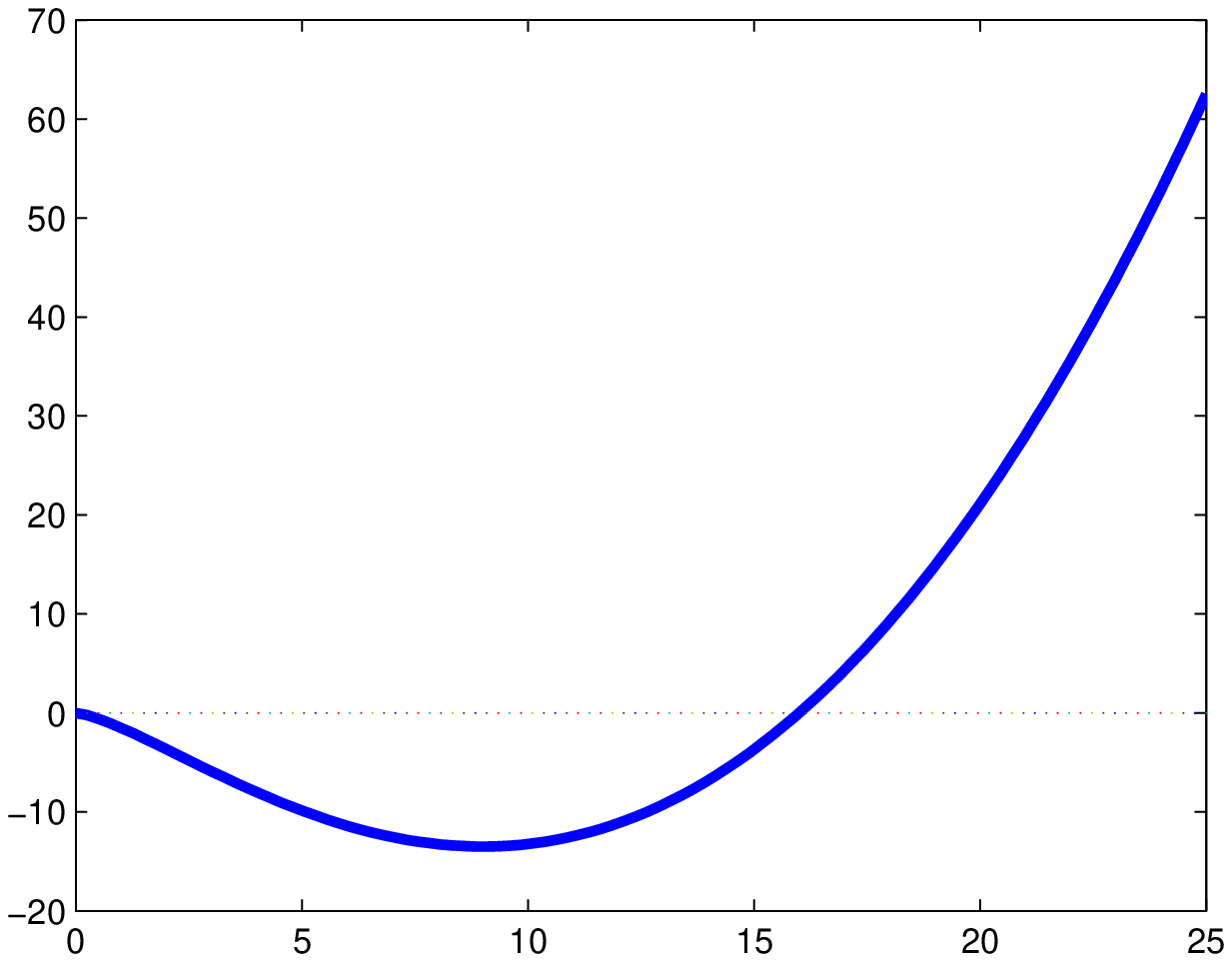} }
        \caption{ $a=0.5, b=2$ and $\lambda=1.5$ \label{kernel}
}
\end{figure} 

We expose in figure \ref{kernel} the evolution of $K(t,.)$ for different times and the behaviour of $\psi$ for $a=0.5, b=2, \lambda=1.5$.

\section{Finite difference schemes  \label{DF}} 

In this section, we present a finite difference numerical scheme to directly approximate the solution to \eqref{EDP} for any $\lambda \in ]0,2[$.  \\ 

\subsection{Integral representation of $\I_\lambda$ \label{sectionintegral}}

To approximate the nonlocal term, it is useful to give an integral formula for $\I_\lambda$: 

\begin{proposition}[\cite{JD3, imbert}] 
If $\lambda \in ]0,2[$, we have for all $\varphi \in \SR$, all $x \in \R$ and all $r>0$,
\begin{equation*}
\I_\lambda[\varphi](x)=C_\lambda \left(\int_{-r}^{r} \frac{\varphi(x+z)-\varphi(x)-\varphi^{'}(x)z}{|z|^{1+\lambda}} \, dz + \int_{\R \setminus (-r,r)} \frac{\varphi(x+z)-\varphi(x)}{|z|^{1+\lambda}} \, dz  \right)
\end{equation*}
where $C_\lambda=\frac{\lambda \Gamma(\frac{1+\lambda}{2})}{2 \pi^{ {\frac{1}{2}+\lambda}} \Gamma(1-\frac{\lambda}{2})}$ and $\Gamma $ denote the Euler function.
\begin{itemize}
\item[1)] If $\lambda \in ]0,1[ $ then
\begin{equation}
\I_\lambda[\varphi](x)=C_\lambda \int_\R \frac{\varphi(x+z)-\varphi(x)}{|z|^{1+\lambda}} \, dz.
\label{expression0}
\end{equation}

\item[2)] If $\lambda \in ]1,2[ $ then
\begin{equation}
\I_\lambda[\varphi](x)=C_\lambda \int_\R \frac{\varphi(x+z)-\varphi(x)-\varphi^{'}(x)z}{|z|^{1+\lambda}} \, dz.
\label{expression1}
\end{equation}

\end{itemize}
\end{proposition}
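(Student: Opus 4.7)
The plan is to verify the identity via the Fourier transform: denote the right-hand side by $T_\lambda[\varphi](x)$, and show that $\F(C_\lambda T_\lambda[\varphi])(\xi) = -|\xi|^\lambda \F\varphi(\xi)$, which by definition equals $\F(\I_\lambda[\varphi])(\xi)$. First, one observes that $T_\lambda[\varphi](x)$ does not depend on the parameter $r>0$: passing from radius $r$ to radius $r' \ne r$ introduces only the integral of $-\varphi'(x)\,z/|z|^{1+\lambda}$ over a symmetric annulus, which vanishes by oddness. Next, for $\varphi \in \SR$ both integrals are absolutely convergent — near $z=0$ the Taylor remainder $\varphi(x+z)-\varphi(x)-\varphi'(x)z$ is $O(z^2)$, which compensates $|z|^{-1-\lambda}$ since $\lambda<2$, while Schwartz decay handles the tails.

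With these justifications I can apply Fubini and commute the Fourier transform with the $z$-integrals, using $\F(\varphi(\cdot+z))(\xi)=e^{2i\pi z\xi}\F\varphi(\xi)$ and $\F(\varphi')(\xi)=2i\pi\xi\,\F\varphi(\xi)$. This gives $\F(T_\lambda[\varphi])(\xi)=\F\varphi(\xi)\,m(\xi)$ with
\[
m(\xi) = \int_{-r}^{r} \frac{e^{2i\pi\xi z}-1-2i\pi\xi z}{|z|^{1+\lambda}}\, dz + \int_{\R\setminus(-r,r)} \frac{e^{2i\pi\xi z}-1}{|z|^{1+\lambda}}\, dz.
\]
Exploiting the oddness of the imaginary parts in $z$ collapses both pieces to the single real integral $m(\xi)=\int_{\R}(\cos(2\pi\xi z)-1)\,|z|^{-1-\lambda}\, dz$. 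The change of variable $u=2\pi|\xi|z$ then yields $m(\xi) = -(2\pi)^{\lambda}|\xi|^{\lambda}\int_\R (1-\cos u)\,|u|^{-1-\lambda}\, du$, and the classical identity $\int_0^{+\infty}(1-\cos u)\,u^{-1-\lambda}\, du = -\Gamma(-\lambda)\cos(\pi\lambda/2)$, combined with the reflection and duplication formulas for $\Gamma$, collapses this to $m(\xi) = -|\xi|^{\lambda}/C_\lambda$ with exactly the constant displayed in the proposition. Hence $\F(C_\lambda T_\lambda[\varphi])=-|\xi|^\lambda \F\varphi=\F(\I_\lambda[\varphi])$, which proves the main formula.

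The two specialized formulas follow as direct corollaries. For $\lambda\in(0,1)$, the subtracted term $-\varphi'(x)z$ is unnecessary: it integrates to zero on $(-r,r)$ by oddness, and $(\varphi(x+z)-\varphi(x))/|z|^{1+\lambda}$ is absolutely integrable on the whole line since $\varphi(x+z)-\varphi(x)=O(|z|)$ near $0$ and $1+\lambda>1$ at infinity, so one merges the two integrals. For $\lambda\in(1,2)$, one lets $r\to+\infty$: the tail $\int_{|z|>r}\varphi'(x)z/|z|^{1+\lambda}\, dz$ vanishes by symmetry and the remainder $(\varphi(x+z)-\varphi(x)-\varphi'(x)z)/|z|^{1+\lambda}$ is absolutely integrable at infinity thanks to Schwartz decay combined with $\lambda>1$.

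The main obstacle in executing this plan is the \emph{constant chase} — making sure that the $2\pi$ powers produced by the Fourier convention of the paper and the $\Gamma$-function identities combine cleanly to the prefactor $C_\lambda=\lambda\Gamma(\tfrac{1+\lambda}{2})/(2\pi^{\frac{1}{2}+\lambda}\Gamma(1-\tfrac{\lambda}{2}))$ displayed in the statement. This is routine once the reflection and duplication formulas are invoked, but it is the step most prone to bookkeeping errors, and it must be performed uniformly in $\lambda\in(0,2)$ (in particular across the borderline $\lambda=1$ where the subtracted first-order term becomes essential).
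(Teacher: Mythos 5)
Your proof is correct, but note there is nothing internal to compare it with: the paper does not prove this proposition at all, it simply quotes it from the cited references (Droniou--Imbert and Imbert), and only proves the subsequent corollary (Proposition \ref{khinchine}). Your route --- showing the Lévy-type integral on the right-hand side is a Fourier multiplier, reducing by parity to $\int_\R \frac{\cos(2\pi\xi z)-1}{|z|^{1+\lambda}}\,dz$, rescaling, and evaluating $\int_0^{+\infty}\frac{1-\cos u}{u^{1+\lambda}}\,du$ --- is essentially the standard symbol computation carried out in those references, and the constant chase does close: your argument gives $C_\lambda=\frac{\Gamma(1+\lambda)\sin(\pi\lambda/2)}{\pi\,(2\pi)^{\lambda}}$, which by the reflection formula $\sin(\pi\lambda/2)=\pi/\bigl(\Gamma(\lambda/2)\Gamma(1-\lambda/2)\bigr)$ and the duplication formula $\Gamma(\lambda)=\frac{2^{\lambda-1}}{\sqrt{\pi}}\Gamma(\lambda/2)\Gamma(\frac{\lambda+1}{2})$ is exactly $\frac{\lambda\Gamma(\frac{1+\lambda}{2})}{2\pi^{\frac{1}{2}+\lambda}\Gamma(1-\frac{\lambda}{2})}$ as stated. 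Two minor points of hygiene: the identity $\int_0^{+\infty}(1-\cos u)\,u^{-1-\lambda}\,du=-\Gamma(-\lambda)\cos(\pi\lambda/2)$ must be read as a removable singularity at $\lambda=1$ (pole of $\Gamma(-\lambda)$ against a zero of the cosine); the equivalent form $\frac{\pi}{2\,\Gamma(1+\lambda)\sin(\pi\lambda/2)}$ is valid uniformly on $(0,2)$ and avoids the issue. And to pass from equality of Fourier transforms to the pointwise identity claimed for all $x$, you should remark that both sides are continuous functions of $x$ (for the left-hand side because $|\cdot|^{\lambda}\F\varphi\in L^1(\R)$), so that Fourier inversion gives equality everywhere, not merely almost everywhere.
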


From this proposition, we deduce the following useful result :

\begin{proposition}
\label{khinchine}
For all $\varphi \in \SR$ and all $x \in \R$ we have for $\lambda \in ]1,2[ $ 
\begin{equation}
\I_{\lambda}[\varphi](x)= \alpha_\lambda \int_\R \frac{\varphi^{''}(x+z)}{ |z|^{\lambda-1}} \, dz= \alpha_\lambda \int_\R \frac{\varphi^{''}(x-z)}{ |z|^{\lambda-1}} \, dz
\label{expression2}
\end{equation}
where $\alpha_\lambda=\frac{C_\lambda}{\lambda(\lambda-1)}$.
\end{proposition}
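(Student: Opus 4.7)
The plan is to derive the $\varphi''$-representation directly from the $\varphi-\varphi(x)-\varphi'(x)z$ representation of the preceding proposition (formula \eqref{expression1}) by integrating by parts twice. Splitting the integral over $\R$ into the two half-lines $(-\infty,0)$ and $(0,\infty)$, I would treat the positive half and then deduce the negative half by the change of variables $z\mapsto -z$, using that $|z|^{1+\lambda}$ and $|z|^{\lambda-1}$ are even.

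On the positive half, starting from
\begin{equation*}
\int_0^{+\infty} \frac{\varphi(x+z)-\varphi(x)-\varphi'(x)z}{z^{1+\lambda}}\,dz,
\end{equation*}
the first integration by parts takes $dv=z^{-(1+\lambda)}dz$, $v=-\frac{1}{\lambda}z^{-\lambda}$. Key point: since $\lambda\in]1,2[$, the boundary term vanishes at $0$ because $\varphi(x+z)-\varphi(x)-\varphi'(x)z=O(z^2)$ and $2-\lambda>0$, and it vanishes at $+\infty$ because $\varphi$ is Schwartz and $1-\lambda<0$. This yields
\begin{equation*}
\frac{1}{\lambda}\int_0^{+\infty}\frac{\varphi'(x+z)-\varphi'(x)}{z^\lambda}\,dz.
\end{equation*}
A second integration by parts with $dv=z^{-\lambda}dz$, $v=-\frac{1}{\lambda-1}z^{-(\lambda-1)}$, kills the boundary terms again (at $0$, $\varphi'(x+z)-\varphi'(x)=O(z)$ and $2-\lambda>0$; at $+\infty$, Schwartz decay plus $\lambda-1>0$) and produces
\begin{equation*}
\frac{1}{\lambda(\lambda-1)}\int_0^{+\infty}\frac{\varphi''(x+z)}{z^{\lambda-1}}\,dz.
\end{equation*}

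An identical calculation on $(-\infty,0)$, or alternatively the substitution $z\mapsto -z$ applied to \eqref{expression1}, gives the analogous contribution on the negative half. Adding the two halves and multiplying by $C_\lambda$ yields
\begin{equation*}
\I_\lambda[\varphi](x)=\frac{C_\lambda}{\lambda(\lambda-1)}\int_\R\frac{\varphi''(x+z)}{|z|^{\lambda-1}}\,dz=\alpha_\lambda\int_\R\frac{\varphi''(x+z)}{|z|^{\lambda-1}}\,dz,
\end{equation*}
which is the first equality. The second equality (with $x-z$ instead of $x+z$) follows from the change of variables $z\mapsto -z$ together with the evenness of the weight $|z|^{\lambda-1}$.

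The only delicate point is the control of the boundary terms during the two integrations by parts: everything hinges on the double restriction $1<\lambda<2$, which makes both $2-\lambda$ and $\lambda-1$ positive and simultaneously lets Schwartz decay absorb the boundary values at $\pm\infty$. Once this is checked, the rest is a routine computation, and both integrals are absolutely convergent since $\varphi''\in\SR$ and $|z|^{-(\lambda-1)}$ is integrable near $0$ (because $\lambda-1<1$) while Schwartz decay handles $\pm\infty$.
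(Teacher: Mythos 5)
Your proof is correct, but it follows a genuinely different route from the paper. You start from formula \eqref{expression1}, split the integral into the two half-lines, and integrate by parts twice, checking that the boundary terms vanish at $0$ (because the Taylor remainder is $O(z^2)$, resp. $O(z)$, and $2-\lambda>0$) and at infinity (because the antiderivatives $z^{-\lambda}$, $z^{1-\lambda}$ decay, which uses $\lambda>1$, while the integrands stay at most linear, resp. bounded). The paper instead writes the remainder in integral form, $\varphi(x+z)-\varphi(x)-\varphi'(x)z=\int_0^1\bigl(\varphi'(x+tz)-\varphi'(x)\bigr)z\,dt$, applies Fubini, and uses the scaling substitutions $y=tz$ and then $z=sy$, so that the constants $\frac{1}{\lambda}$ and $\frac{1}{\lambda-1}$ arise as the elementary integrals $\int_0^1 t^{\lambda-1}dt$ and $\int_0^1 s^{\lambda-2}ds$ (the latter finite precisely because $\lambda>1$). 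Your approach is more elementary and makes the role of the double constraint $1<\lambda<2$ transparent through the boundary terms, at the cost of some bookkeeping on the two half-lines; the paper's approach avoids boundary terms entirely and works directly on all of $\R$, at the cost of justifying the Fubini interchange (absolute integrability, which holds since the inner integrand is $O(|z|^{-\lambda})$ near $0$ and integrable at infinity). Both arguments are sound and yield the same constant $\alpha_\lambda=\frac{C_\lambda}{\lambda(\lambda-1)}$, and your final symmetry argument $z\mapsto -z$ for the second equality matches the evenness of $|z|^{\lambda-1}$, exactly as the paper implicitly uses.
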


\begin{proof}
The regularity of $\varphi$ ensures the validity of the following computations .
Since
\begin{eqnarray*}
 \varphi(x+z)-\varphi(x)-\varphi^{'}(x)z &=& \int_{0}^{z} \left(\varphi^{'}(x+y)-\varphi^{'}(x)\right) \, dy \\
 &=& \int_{0}^{1}\left( \varphi^{'}(x+tz)-\varphi^{'}(x)\right) \, z \, dt, 
\end{eqnarray*}
the last equality arises from the change of variable $y = t z$. Then, using Fubini's Theorem, we have
\begin{eqnarray*}
\I_{\lambda}(x) &=& C_\lambda \int_\R \int_{0}^{1} (\varphi^{'}(x+tz)-\varphi^{'}(x)) \, \frac{z}{|z|^{1+\lambda}} \, dt \, dz \\
&=& \int_{0}^{1} \left(\int_\R (\varphi^{'}(x+tz)-\varphi^{'}(x)) \, \frac{z}{|z|^{1+\lambda}} \, dz \right) dt \\
&\underbrace{=}_{y=tz}& C_\lambda  \int_{0}^{1} \left( \int_\R (\varphi^{'}(x+y)-\varphi^{'}(x)) \frac{y}{|y|^{1+\lambda}} \, dy \right) t^{\lambda-1} \, dt \\
&=& C_\lambda \int_{0}^{1} t^{\lambda-1} \, dt \int_\R (\varphi^{'}(x+y)-\varphi^{'}(x)) \frac{y}{|y|^{1+\lambda}} \, dy \\
&=& \frac{C_\lambda}{\lambda} \int_\R \left( \int_{0}^{1} \varphi^{''}(x+ sy) y \, ds \right) \frac{y}{ |y|^{1+\lambda}} \, dy \\
&\underbrace{=}_{z=sy}& \frac{C_\lambda}{\lambda} \int_\R \frac{\varphi^{''}(x+ z)}{|z|^{\lambda-1}} \, dz \int_{0}^{1} s^{\lambda-2} \, ds \\
&=& \frac{C_\lambda}{\lambda (\lambda-1)} \int_\R \frac{\varphi^{''}(x+ z)}{|z|^{\lambda-1}} \, dz.
\end{eqnarray*} 
\end{proof}


\subsection{The numerical scheme}
The spatial discretization is given by a set of points ${x_{i}; i=1,...,N}$ and the discretization in time is represented by a sequence of times $t^0=0<...<t^n<...<T$. For the sake of simplicity we will assume constant step size $\delta x$ and $\delta t$ in space and time, respectively. The discrete solution at a point will be represented by $u^n_i \approx u(t^n,x_i)$. 
In this section, we will present the behaviour of explicit numerical scheme which directly approximate the solution to \eqref{EDP}.

We discretize all terms of the equation using an explicit method. We consider for any $n,j \in \mathbb{N}$
\begin{equation}\label{explicit}
\frac{u_{j}^{n+1}-u_{j}^{n}}{\delta t}- a \, \frac{u_{j+1}^{n}-2u_{j}^{n}+u_{j-1}^{n}}{\delta x^{2}}+ b \, \I_{\delta x}[u^{n}]_{j}=0
\end{equation} 
where $\I_{\delta x}$ is a discretization of the nonlocal term $\I_\lambda$. 
This scheme is explicit because the values of the solution at time $t^{n+1}$ are obtained directly from the (known) values at time $t^n$. For the Laplacian term,  we use a  standard centered finite difference approximation of second order. 
To discretize the fractal operator $\I_\lambda$, we consider the formulation \eqref{liouville}, which  for $\lambda \in ]1,2[$ is a causal variant of \eqref{expression2}. Next, we use a basic quadrature rule to approximate the integral and we use a finite difference approximation of the derivative: 

\begin{equation} \label{discretization1}
\I_{\delta x}[v]_{j}=\delta x^{-\lambda}\,\sum^{+ \infty}_{l=1} l^{1-\lambda}( v_{j-l+1}-2v_{j-l}+v_{j-l-1});
\end{equation}

Note that we have absorbed the constant $\alpha_\lambda$ in $b$.

\begin{remark}
The practical implementation of the schemes requires to make truncations. The main truncation concerns the integral operator for the nonlocal term $\I_{\lambda}$. We replace $\int_{0}^{+\infty}$ with $\int_{0}^L$ and the finite difference approximation becomes:
\begin{equation} \label{discretization2}
\I_{\delta x}[v]_{j}=\delta x^{-\lambda}\,\sum^{A}_{l=1} l^{1-\lambda}( v_{j-l+1}-2v_{j-l}+v_{j-l-1});
\end{equation}
with $L=A \,\delta x$. 
Usually, we consider that functions are 
either compactly supported or constant near $-\infty$ and $+\infty$, so it is legitimate to consider a finite sum for the discretization of $\I_\lambda$. However the truncation parameter $A$ has to be chosen judiciously. Indeed, when $ 0< \lambda < 1$ , the term $l^{1-\lambda}$ in the discretization \eqref{discretization2} is bigger and bigger for increasing  $l$, hence $A$ ought to be big enough. In contrast, whenever $1 < \lambda < 2 $,  $l^{1-\lambda}$ is negligible for large values and is important only for small values of $l$. Thus in this case, it is judicious to take $A$ small enough. We see again here that the non local  effect is all the more important than $\lambda$ is small.
These memory effects strongly depending on $\lambda$ have been reported in \cite{Diet}.
To take this behaviour into account, we   set the truncation parameter $A := \max(100,\frac{10}{\lambda})$.
Of course $100$ is an arbitrary limit to avoid rounding effects for small $\lambda$.

\end{remark}

Since the equation is implemented using an explicit method, this imposes a CFL condition on the time and space steps which ensures the numerical stability that is to say that the difference between the approximate solution and the exact solution remains bounded when $n\rightarrow +\infty$ for $\delta x, \delta t$ given. The stability analysis of the nonlocal scheme
\eqref{explicit} is done in \cite{afaf}. This requires a careful study, owing to the anti-diffusive behaviour of the nonlocal term. Indeed,  the analysis of the PDE \eqref{EDP}  shows that for the continuous problem, low  to medium frequencies are amplified by the nonlocal term, \cite{afaf}.  We come back to this in section \ref{fft}.
Therefore, the standard Von Neumann definition of stability must be slightly modified. Denoting $\theta = k \,\delta x \in (0, 2 \pi)$ and  $u^{n}_j = \exp{(i j \theta)}$,
instead of imposing that the amplification factor $G(\theta) = \frac{u^{n+1}_j}{u^{n}_j} $ must fulfill $|G(\theta)| < 1$  for any frequency
$\theta$,  we only impose  $|G(\theta)| < 1$  for $\theta$ above a given threshold.
We obtain the  two following conditions on the time and space steps:
\begin{equation} 
(1-2^{1-\lambda})\, \frac{b }{\delta x^\lambda} < \frac{2 a}{\delta x^2} ,
\label{scond1}
\end{equation}
\begin{equation} 
\frac{2 a \,\delta t}{\delta x^2} + (2-2^{1-\lambda})\, \frac{b \,\delta t}{\delta x^\lambda} <1.
\label{scond2}
\end{equation}
 The first stability condition \eqref{scond1}  forces the mesh-size $\delta x$ to be small enough in order that the diffusion term should dominate  the nonlocal anti-diffusive term for high frequencies, whereas the second stability condition \eqref{scond2}  looks like an usual CFL condition for explicit schemes and forces the time-step to be small enough.
For more details, we refer the reader to \cite{afaf}. \\

\subsection{Numerical results}

In the following numerical tests, we have to take care to choice $\delta t$ and $\delta x$ accuratly following the conditions \eqref{scond1} and \eqref{scond2}. Thus, the time and space steps depend on the choice of $a, \, b$ and $\lambda$. 
We begin by consider an electrocardiogram (ECG) signal.
Figure \ref{DF1} illustrates a comparison between the PDE \eqref{EDP} implemented using a finite difference method (FDM) and the Savitzky-Golay smoothing filter. \\
The Savitzky-Golay smoothing filter also called digital smoothing polynomial filter or least-squares smoothing filter was first described in 1964 by Abraham Savitzky and Marcel J. E. Golay. \cite{sgolay}.
Essentially, the Savitzky-Golay method performs a local polynomial regression on a series of values that is to say it replaces each value of the series with a new value which is obtained from a polynomial fit to neighboring points. 
The algorithm is based on the following equation
\begin{equation*}
u_j=\left[ a_0 v_j+ \sum_{i=1}^{\frac{n+1}{2}} a_i(v_{j-i}+v_{j+i}) \right] 
\end{equation*}
where $n$ is the number of data points, $a_i$ are (positive) constants, $v$ is the noisy signal and $u$ defines the filtered signal.
The main advantage of this approach is the preservation of features of the signal such as relative maxima, minima and width. Indeed, usually these characteristics  are 'flattened' by classical averaging techniques such as moving averages. 
Figure \ref{DF1} shows three plots: The noisy ECG signal, the smoothed signal (red) using the PDE filter \eqref{EDP} superimposed with
the noiseless signal (blue) and the smoothed signal (red) using the Savitzky-Golay filter superimposed with
the noiseless signal (blue). As we can remark, for these parameters, the low/medium frequencies are not amplified but the relative maxima and minima are preserved. The denoising seems correct and the filtering with our numerical scheme and with Savitzky-Golay seems similar, see better. 
We evaluate our approach and we compare it with Savitzky-Golay in section \ref{metrique}.

\begin{figure}[!ht!] 
\begin{center}
\includegraphics[scale=0.5]{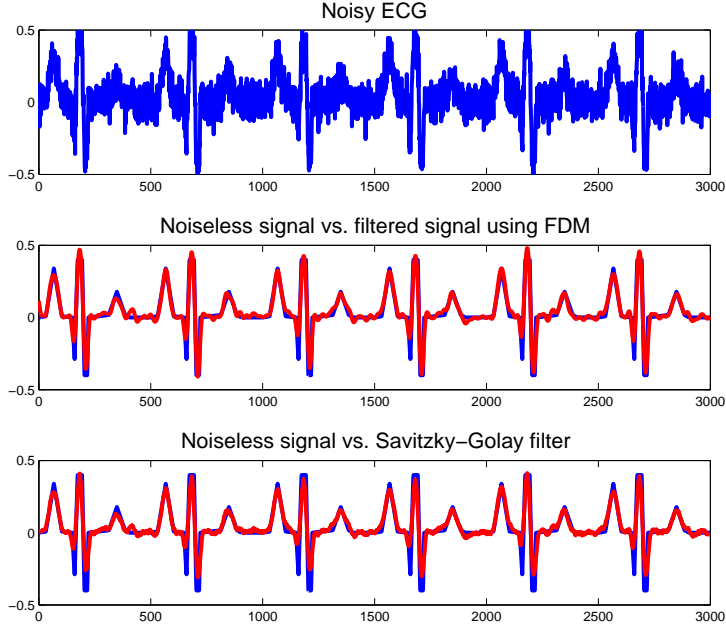}
\caption{ Top: a noisy electrocardiogram $u$; Middle: in red the noiseless signal,  in blue  the filtered signal  with finite difference scheme \eqref{explicit} with  $a=4, b=0.5 , \lambda=1.7, \delta x=1$ and $\delta t=0.1$; Bottom : Noiseless signal (blue) vs. filtered signal using Savitzky-Golay filter. \label{DF1}}
\end{center}
\end{figure}

Figure \ref{justdenoisingDF} illustrates two filtered signals of different types. We start from the simplest possible example of nonlocal filter denoising applied to a signal $u(x)$ consisting of a piecewise constant step function $v(x)=-1$ for $0<x<1$ and $v(x)=1$ for $1<x<2$ corrupted by additive Gaussian white noise $n(x)$ with  standard deviation $\sigma=0.4$:
\begin{equation}
u(x)=v(x)+n(x).
\end{equation}
The result of the denoising is illustrated in figure \ref{indicatrice}. As we can notice, the noise is very well eliminated and we find again our original signal, the shape of the signal has been preserved. The result is better than filtering Savitzky-Golay. Moreover, unlike Discrete Fourier Transform where the contrast is low in the neighbourhood of the discontinuity (see figure \ref{indicatricefft}), the dicontinuity/shock is conserved. 
Therefore, the finite difference method is more suitable for this kind of signal. 
Figure \ref{DFsmtlb} shows another example of good denoising. 

\begin{figure}[h!]
	\centering
	    \subfigure[ $a=3.5, \, b=0.2, \, \lambda=0.1, \delta t=10^{-6}$ and  \hspace{0.2 cm} $\delta x=0.001$.]
	{\includegraphics[scale=0.3]{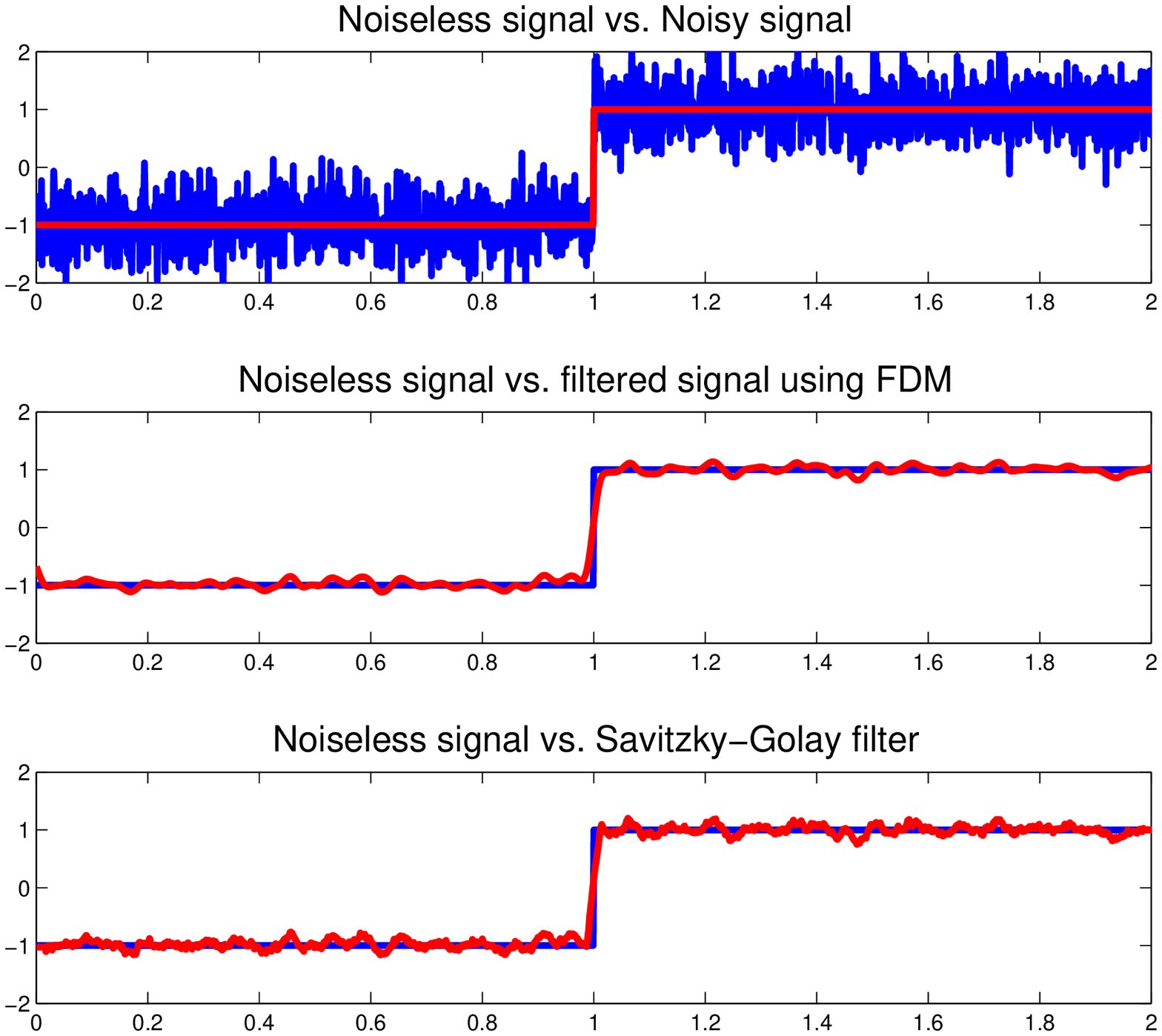}  \label{indicatrice} }
        \subfigure[$a=3.5, \, b=0.3, \, \lambda=1.1, \delta t=0.1$ and $\delta x=1$.  ]
	{\includegraphics[scale=0.3]{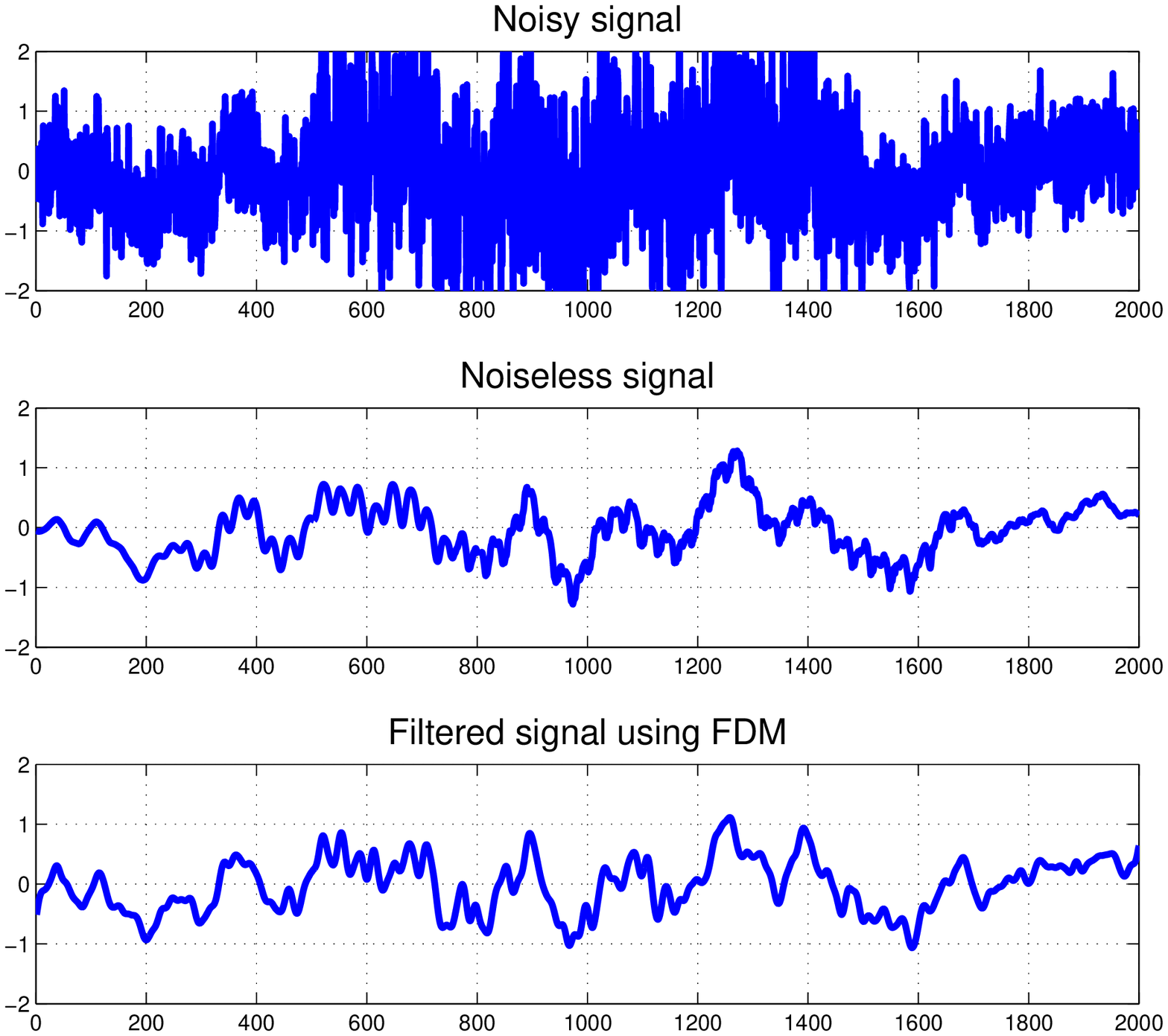} \label{DFsmtlb} }
        \caption{ 
        (a) Top: Noisy signal (blue) vs. noiseless signal (red); Middle: Noiseless signal (blue) vs. filtered signal using FDM (red); Bottom: Noiseless signal (blue) vs. filtered signal using Savitzky-Golay filter (red). (b) Top: Noisy signal; Middle: Noiseless signal ; Bottom: filtered signal using FDM .
        \label{justdenoisingDF} }
\end{figure} 


\section{Numerical results using Discrete Fourier Transform \label{fft}}

In this section, we first discuss the choice of parameters $a,b,\lambda$ and next, we give some numerical results based on fast Fourier transform (FFT) which is an efficient algorithm to compute the discrete Fourier transform.  

\subsection{Choice of parameters \label{choix} }

We fix $T=1$. So, we can rewrite the kernel in Fourier variable as follows:
\begin{equation*}
K_{a,b}^{\lambda}(\xi)=e^{-\psi(\xi)}
\end{equation*}
where $ \psi(\xi) = 4 \pi^2 a\xi^2-b|\xi|^{\lambda}$.
\begin{figure}[h!]
	\centering
        \subfigure[ $4 \pi^2  a = 0.01 $ and $ \lambda = 1.5$]
	{\includegraphics[scale=0.5]{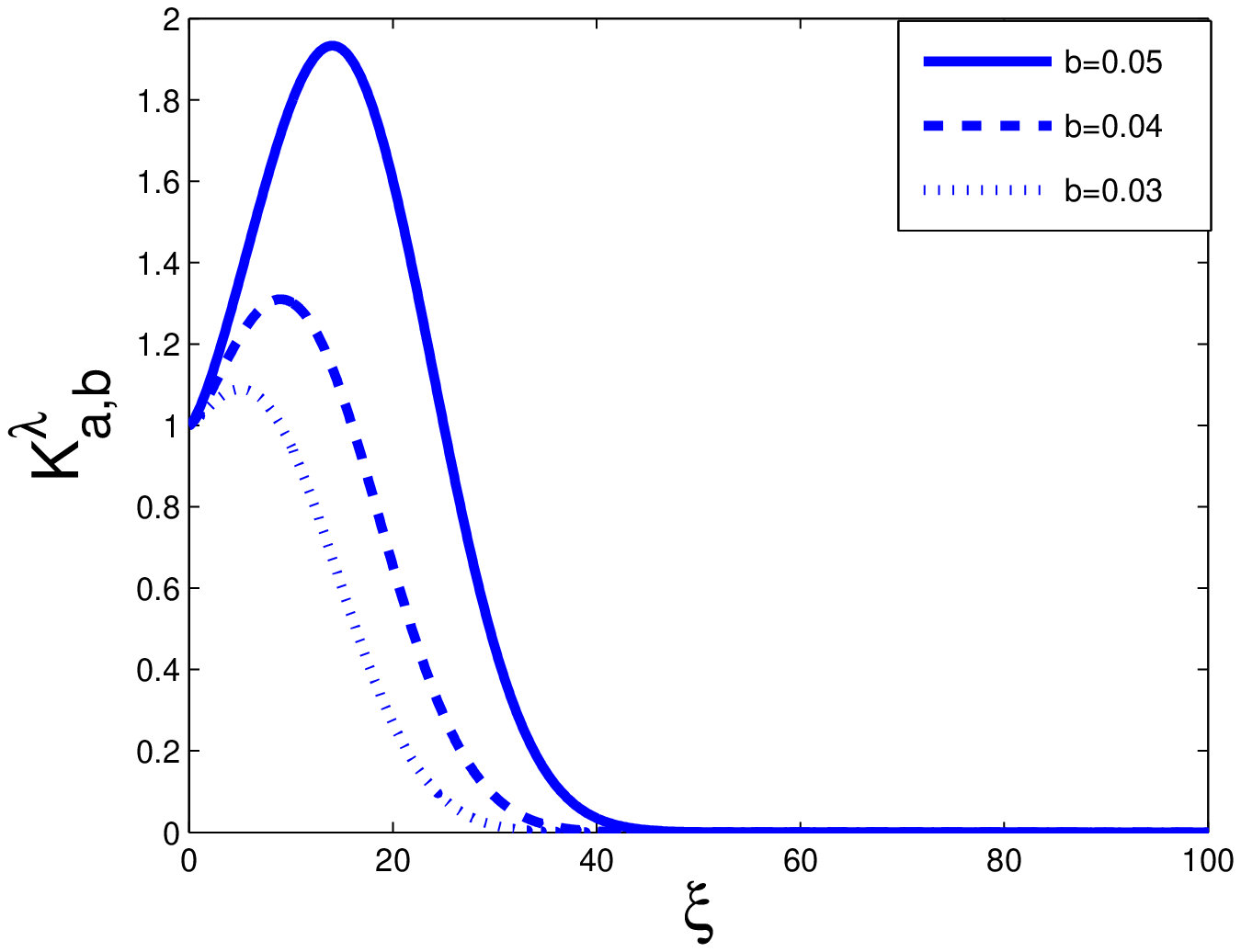}}
	\subfigure[ $4 \pi^2 a = 0.01 $, $ b =0.05 $ and $\lambda=1.5$ ]
	{\includegraphics[scale=0.35]{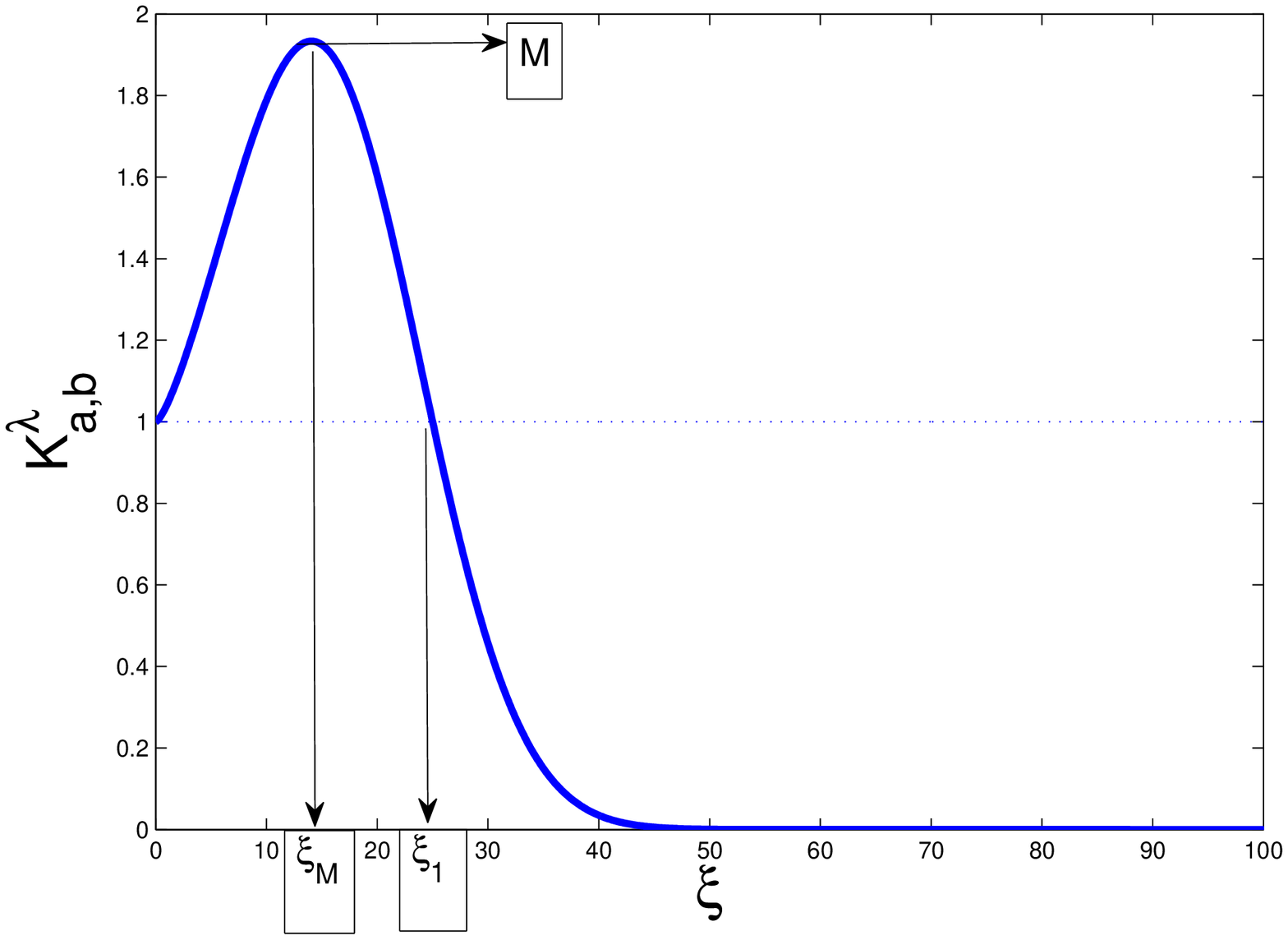}}
        \caption{ Evolution of the spatial Fourier transform of the kernel $K$ for different parameters.\label{fenetre}
}
\end{figure} 
We draw in figure \ref{fenetre} the behaviour of the kernel 
$K_{a,b}^{\lambda}$ for $a,\lambda$ fixed and for different values of $b$. 
As $K_{a,b}^{\lambda}$ reaches it maximum at $\xi_M:=\left(\frac{\lambda b}{8 \pi^2 a}\right)^{\frac{1}{2-\lambda}}$ and $K_{a,b}^{\lambda}(\xi_M)=e^{4 \pi^2 a(\frac{\lambda b}{8 \pi^2 a})^{\frac{2}{2-\lambda}}(\frac{2}{\lambda} - 1)} $, we deduce that $K_{a,b}^{\lambda}(\xi_M)>1$ if and only if $0<\lambda<2$. Therefore, whatever the choice of $\lambda \in ]0,2[$ , we always have an amplification of medium frequencies. Of course, the magnitude of amplification depends on $a,b,\lambda$. 
Another natural frequency is  $\xi_1=\left(\frac{b}{4 \pi^2 a}\right)^{\frac{1}{2-\lambda}} $ which represents the  neutral frequency and satisfies $K^{\lambda}_{a,b}(\xi_1)=1$ and $K^{\lambda}_{a,b}(\xi_1) < 1$ for $\xi > \xi_1$. Therefore $\xi_1$ is the threshold above which dampening will occur. 

Now, we wish to control both the denoising and contrast enhancement. For that, we follow a simple strategy: 

\begin{enumerate}
\item In a first step, we wish to control the frequency range to amplify. This one can be controlled by the following ratio: $ \frac{\xi_1}{\xi_{M}} = (\frac{2}{\lambda})^{1/2-\lambda} $. Indeed, 
this ratio is decreasing w.r.t. parameter  $\lambda$ and shrinks from $+\infty$ to $\sqrt{e}$ when 
$\lambda$ varies from $0$ to $2$.  Hence, to choose a given amplified frequency range, it is enough to fix  parameter $\lambda$ and  the ratio
 $\frac{b}{a}$  (which in turn determines
 $\xi_1=\left(\frac{b}{4 \pi^2 a}\right)^{\frac{1}{2-\lambda}}$). 

\item In this step, we want to monitor the denoising, which will start for frequencies above the neutral  one $\xi_1$. One can easily check the following equality:
 $$ \psi(\alpha \xi_{1}) = 4 \pi^2 a \,\xi_1^2 (\alpha^2-\alpha^{\lambda})  $$
 where $\alpha$ is a any positive constant. Therefore, the greater $a$ is, the smaller $K^{\lambda}_{a,b}(\alpha \xi_1) $ will be, which means that the dampening rate will increase. Hence parameter $a$ monitors the denoising intensity, this is coherent because it controls the Laplacian term. 
 
\item   To finish with, we wish to monitor the value $M:=K^{\lambda}_{a,b}(\xi_M)$ in order to control the contrast enhancement.
Indeed, the higher $M$ is, the better the constrast will be amplified.
Parameters $\lambda$ and  $\frac{b}{a}$  being fixed by item 1  it is enough to adjust both parameters $b$ and $a$  while keeping the ratio $\frac{b}{a}$ within some bounds in order to preserve the amplified  frequency range. The expression of $M$ shows that to have a good amplification of medium frequencies, it is necessary to increase $b$ as well as  $a$. This is quite natural since the coefficient $b$ controls the anti-diffusive term and has an opposite behaviour to the Laplacian term which tends to flatten the signal. In our method, contrast and denoising are no more antagonistic. 
We come back on the tuning of parameter $b$ in section \ref{metrique}.
\end{enumerate}

\subsection{Denoising}
In this subsection, we are only going to highlight the ability of noise reduction of our nonlocal equation \eqref{EDP}.\\ 
 To begin with, we consider an ECG signal to illustrate the denoising using the FFT to solve the fractal equation \eqref{EDP}. The result is given in the figure \ref{ecgjustdenoising}.   
\begin{figure}[h!]
	\centering
        \subfigure[ $4 \pi^2 a = 0.01 , b = 0.03$ and $ \lambda = 0.5$ ]
	{\includegraphics[scale=0.4]{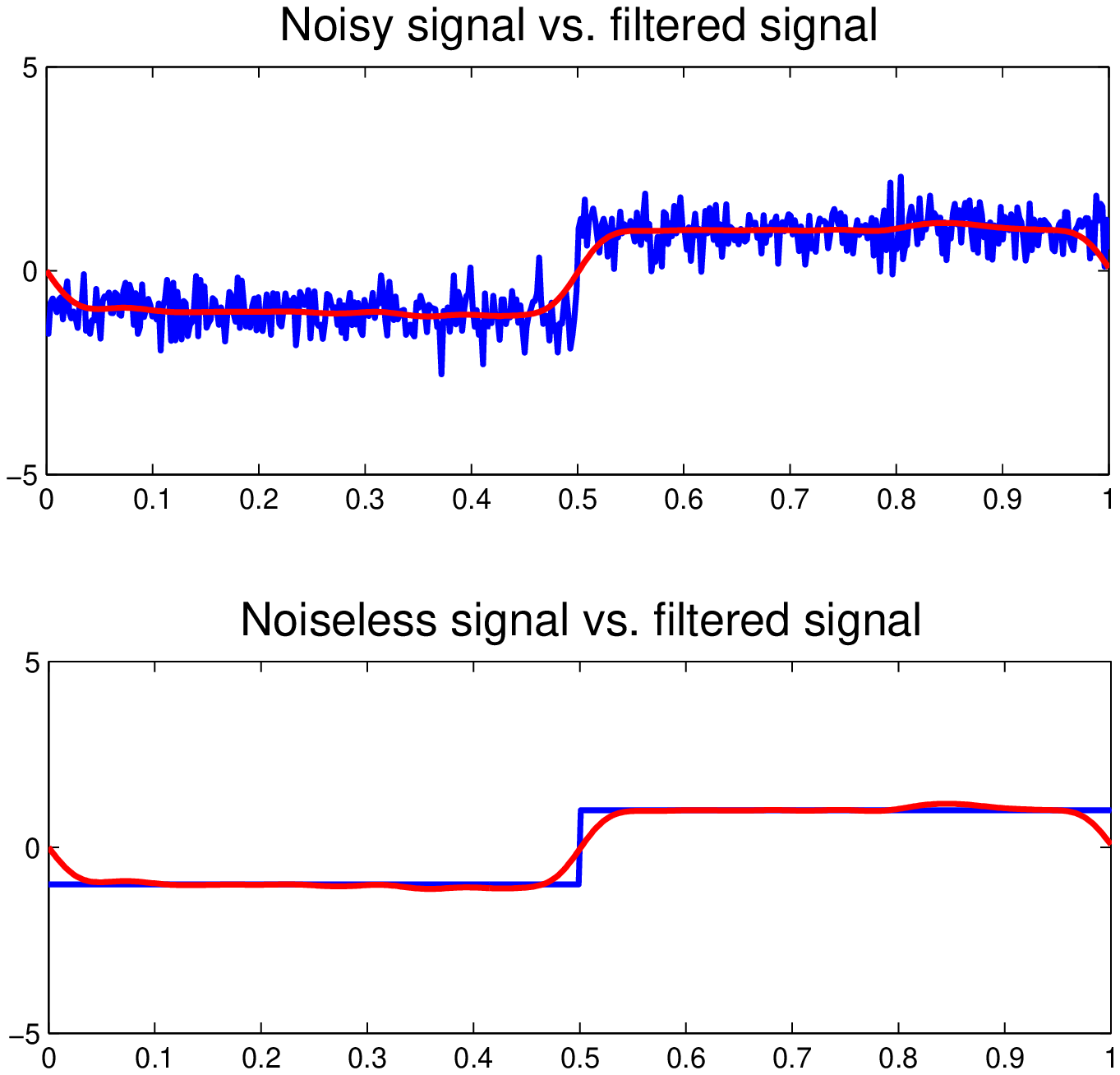} \label{indicatricefft} }
	\subfigure[ $ 4 \pi^2 a =0.005 1 , b = 0.015 $ and $ \lambda = 1.7$]
	{\includegraphics[scale=0.4]{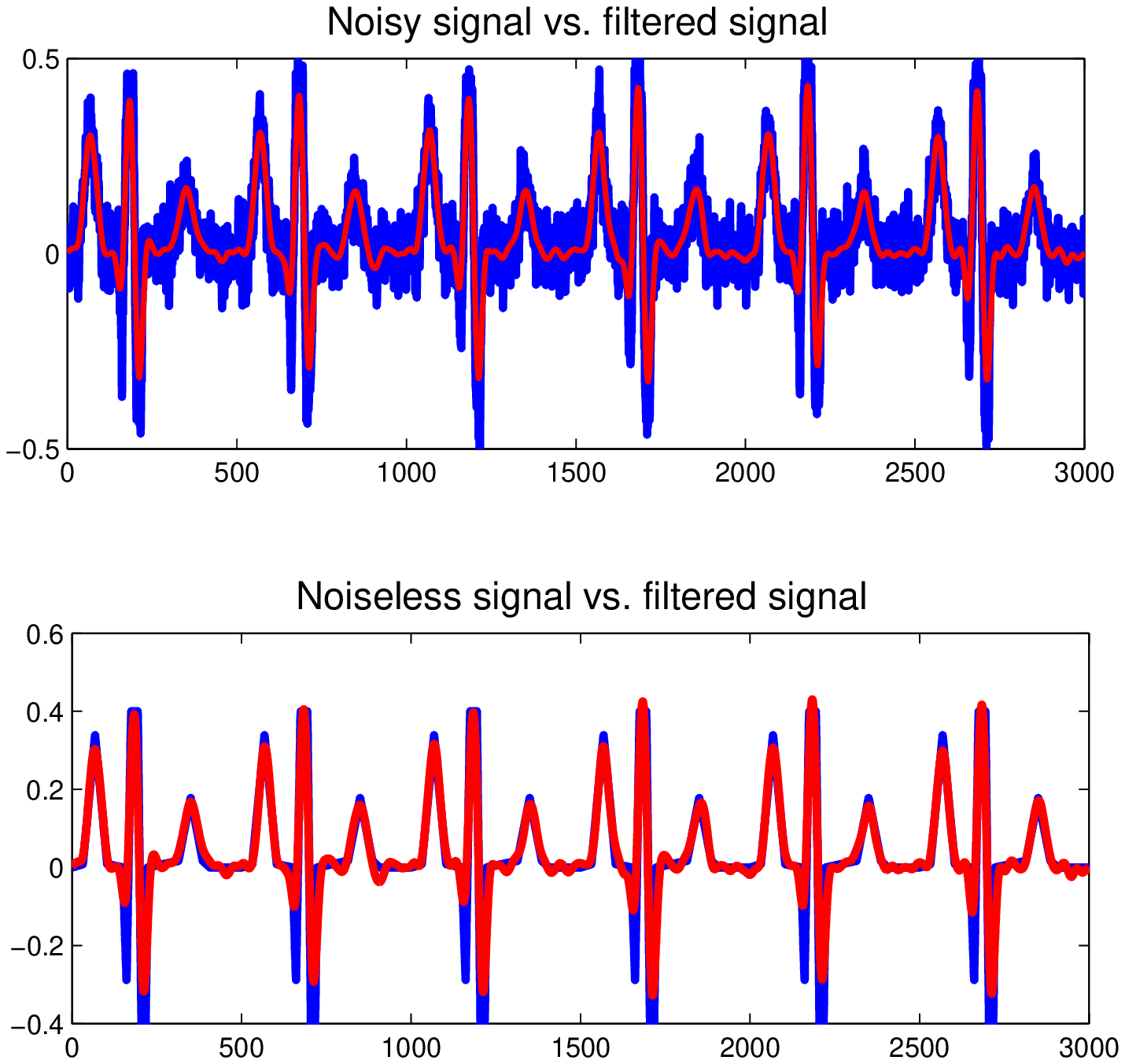}  \label{ecgjustdenoising} }
        \caption{ Top: Noisy signal (blue) vs. filtered signal (red). Bottom: noiseless signal (blue) vs. filtered signal (red).  \label{justdenoising} }
\end{figure} 
For this choice of parameters, we note that the denoising is suitable and that the relative maxima are preserved. However, the relative minima are not completely preserved, in fig. \ref{ecgjustdenoising} bottom. 
At last, Figure \ref{sinusoidalb} describes the denoising of a sinusoidal signal. We can see the performance of the denoising of nonlocal approach discretized with FFT, which works very well for this type of periodic signal.   

\subsection{Denoising \& Enhancement of signals \label{both}}
In this part, we are going to highlight both the ability of noise reduction and contrast enhancement. Of course, to emphasize the amplification, we take into account the remarks done in subsection \ref{choix}.  

We start from a simple example of simulatenous denoising and enhancement applied to a strongly attenuated sinusoidal signal highly corrupted by a random noise.
In figure  \ref{fourier125} middle, we plot the original noiseless signal, the same signal amplified $\times 50$ and the filtered signal, performed with our non local FFT method.  As we can notice in figure \ref{fourier125}, the noise is eliminated and the contrast is well  amplified.  

\begin{figure}[!ht!] 
\begin{center}
\includegraphics[scale=0.5]{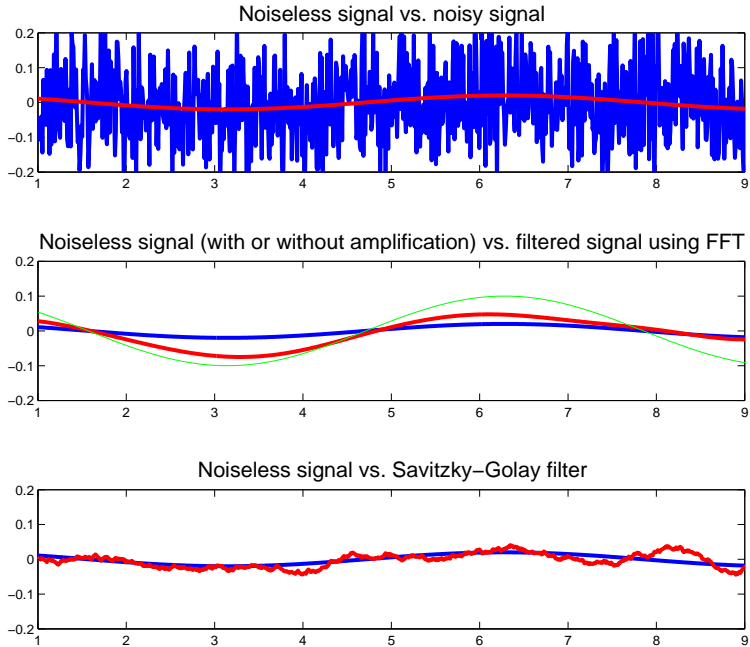}
\caption{Top: Noisy signal (blue) vs. noiseless signal $v(x)=0.02 \cos (x)$ (red); Middle: Noiseless signal (blue) vs. filtered signal (red) using FFT with $ 4 \pi^2 a = 0.2, b = 1.25, \lambda = 0.5$ superimposed with \emph{amplified} noiseless signal $cos(x)$ (green);  Bottom: Noiseless signal (blue) vs. filtered signal (red) using Savitzky-Golay method.
 \label{fourier125}}
\end{center}
\end{figure}

\begin{figure}[!ht!] 
\begin{center}
\includegraphics[scale=0.7]{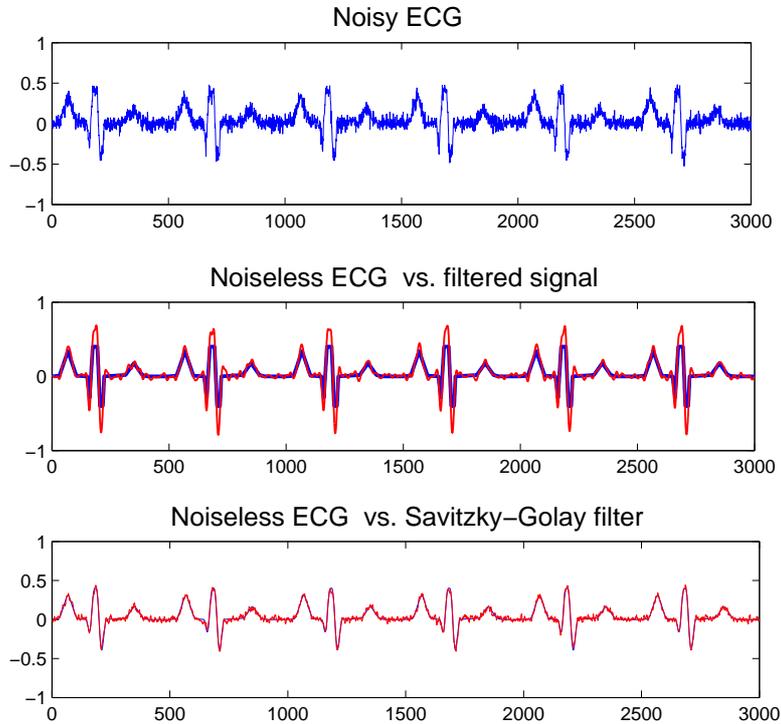}
\caption{Top: a noisy electrocardiogram $u(x)$; Middle: the noiseless signal $v(x)$ (blue) and denoising \& enhancement of the ECG signal (red) using the FFT on the fractal equation \eqref{EDP} with $ 4 \pi^2 a = 0.005, b = 0.0225$ and $\lambda = 1.7$. Bottom: filtering using Savitzky-Golay method.
 \label{ecgfft}}
\end{center}
\end{figure}

Figure \ref{ecgfft} illustrates the smoothing of an electrocardiogram signal by filtering the noise with Savitzky-Golay filter (third plot) and by denoising and enhancement  with the fractal scalar conservation law \eqref{EDP} (second plot).
In the second case, we can see that the relative maxima and minima are amplified. 
Of course, we can obtain even greater amplification of low/medium frequencies by suitably tuning the parameters but, in these conditions, we will obtain sizeable variations between each peak. Thus,   when we wish to amplify the low frequencies, we have to be careful  that we do not alter too much the shape of the noiseless signal.    

To rate the performance of denoising and enhancement of our filter, we consider signals with different SNR, which corresponds to the power ratio between a signal and the background noise. 
\begin{figure}[!ht!] 
\begin{center}
\includegraphics[scale=0.7]{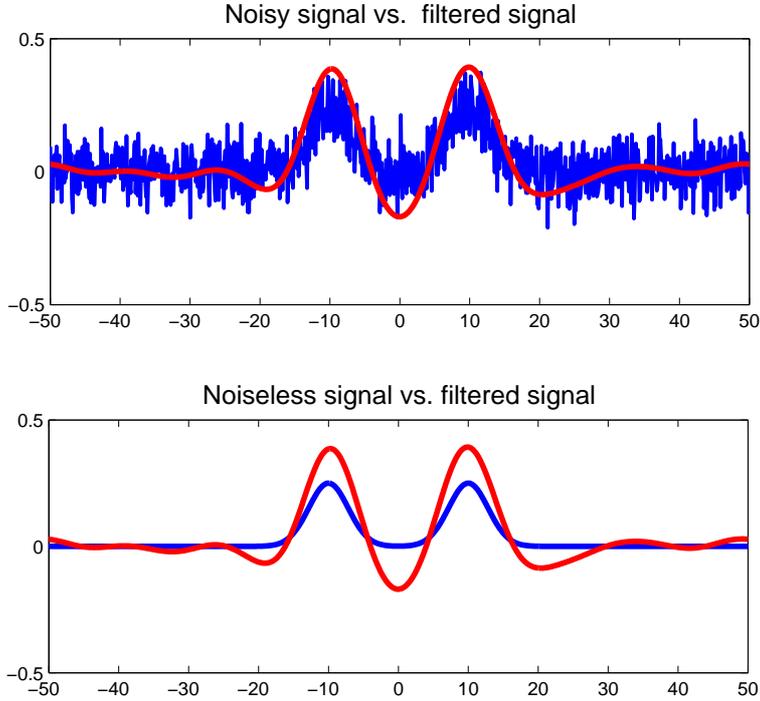}
\caption{First plot (top) represents a noisy signal $u(x)$ (blue). Second plot (bottom) illustrates  the noiseless signal $v(x)$ (blue) and denoising \& enhancement of the signal (red) using the FFT on the fractal equation \eqref{EDP} with $4 \pi^2 a = 0.1, b =0. 3$ and  $\lambda = 1.5$.
 \label{dunes2}}
\end{center}
\end{figure}

\begin{figure}[!ht!] 
\begin{center}
\includegraphics[scale=0.5]{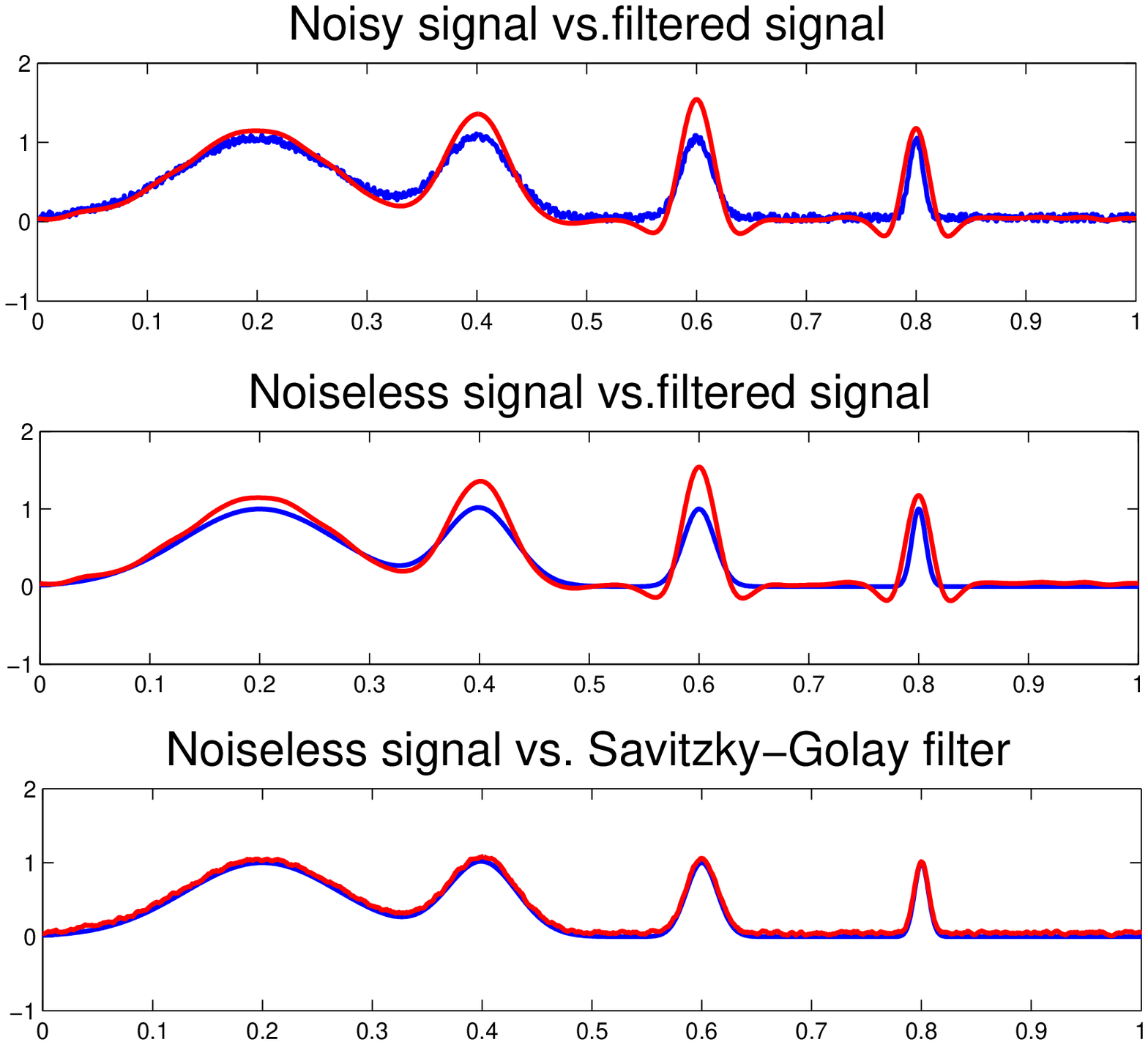}
\caption{ Choice of parameters: $ 4 \pi^2 a =0.0 1, b = 0.05$ and  $\lambda = 1.5$. The red line depicts the filtered signal. 
 \label{dunesbcp}}
\end{center}
\end{figure}

In figure \ref{dunes2}, we took a small SNR.
We can observe that shapes are amplified and that  noise has been reduced significantly. But, we can also remark  the undershoots just before and behind the shapes. This phenomenon has been highlighted in \cite{alibaud} in the setting of dunes morphodynamics. It is a consequence of the mass conservation property, see equation \eqref{conserve}. We obtain similar results in figure \ref{dunesbcp} with a large SNR. Indeed, as we can see, the proposed method of filtering  yields \emph{both} an interesting denoising and an amplification of low/medium frequencies. The third plot conveys the filtering using the Savitzky-Golay approach. As agreed upon, we obtain a preservation of relative maxima. Moreover, comparing the output obtained thanks to our nonlocal method with the one of Savitzky-Golay filter, we notice the better ability to \emph{increase} local extrema (contrast enhancement) while keeping a good denoising. 
\begin{figure}[h!]
	\centering
	\subfigure[ $ 4 \pi^2 a =0.01 , b =0.03$ and $\lambda = 1.5$ ]
	{\includegraphics[scale=0.45]{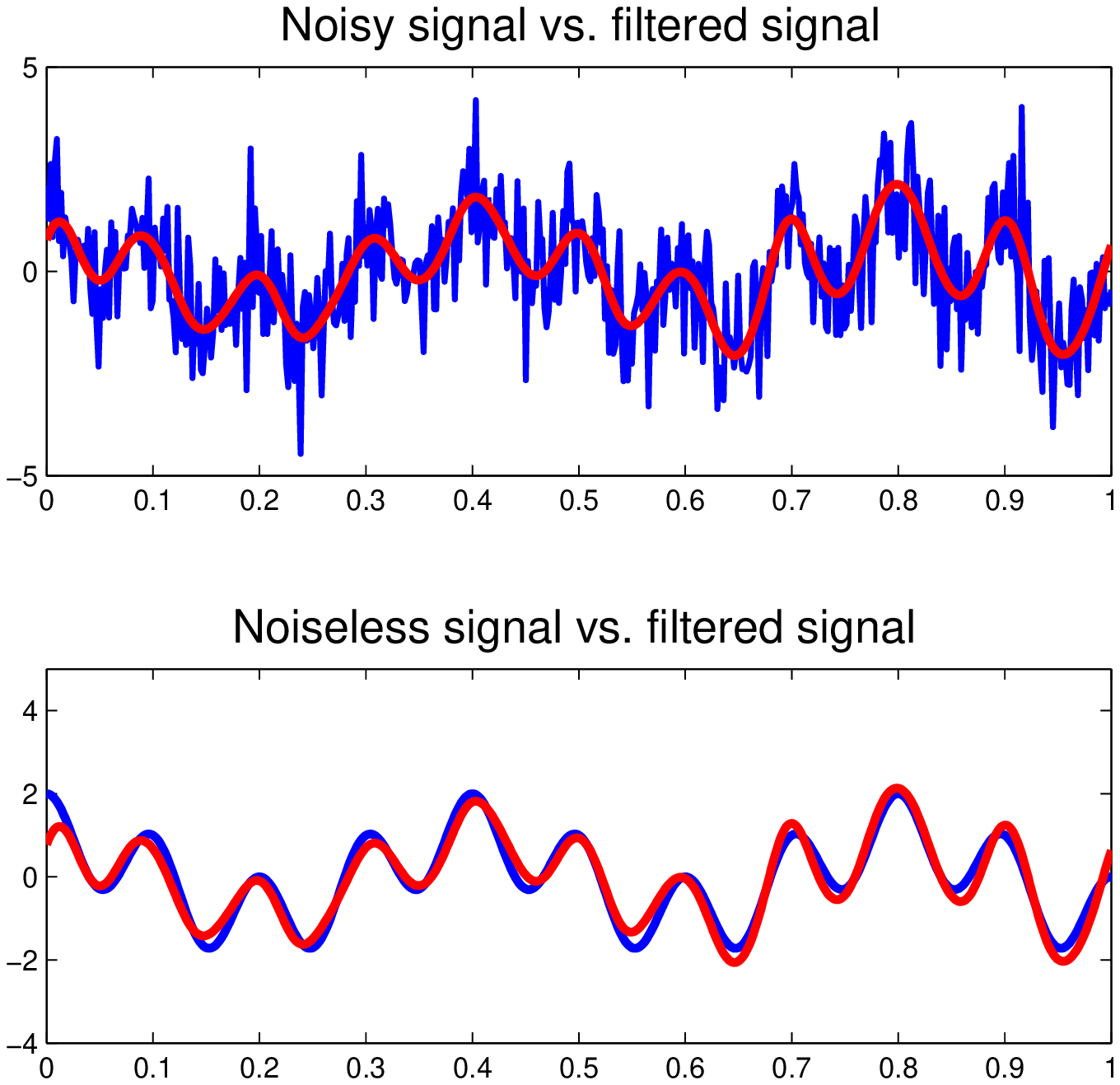}  \label{sinusoidalb} }
	 \subfigure[ $4 \pi^2 a =0.01 , b =0.05$ and $\lambda = 1.5$]
	{\includegraphics[scale=0.45]{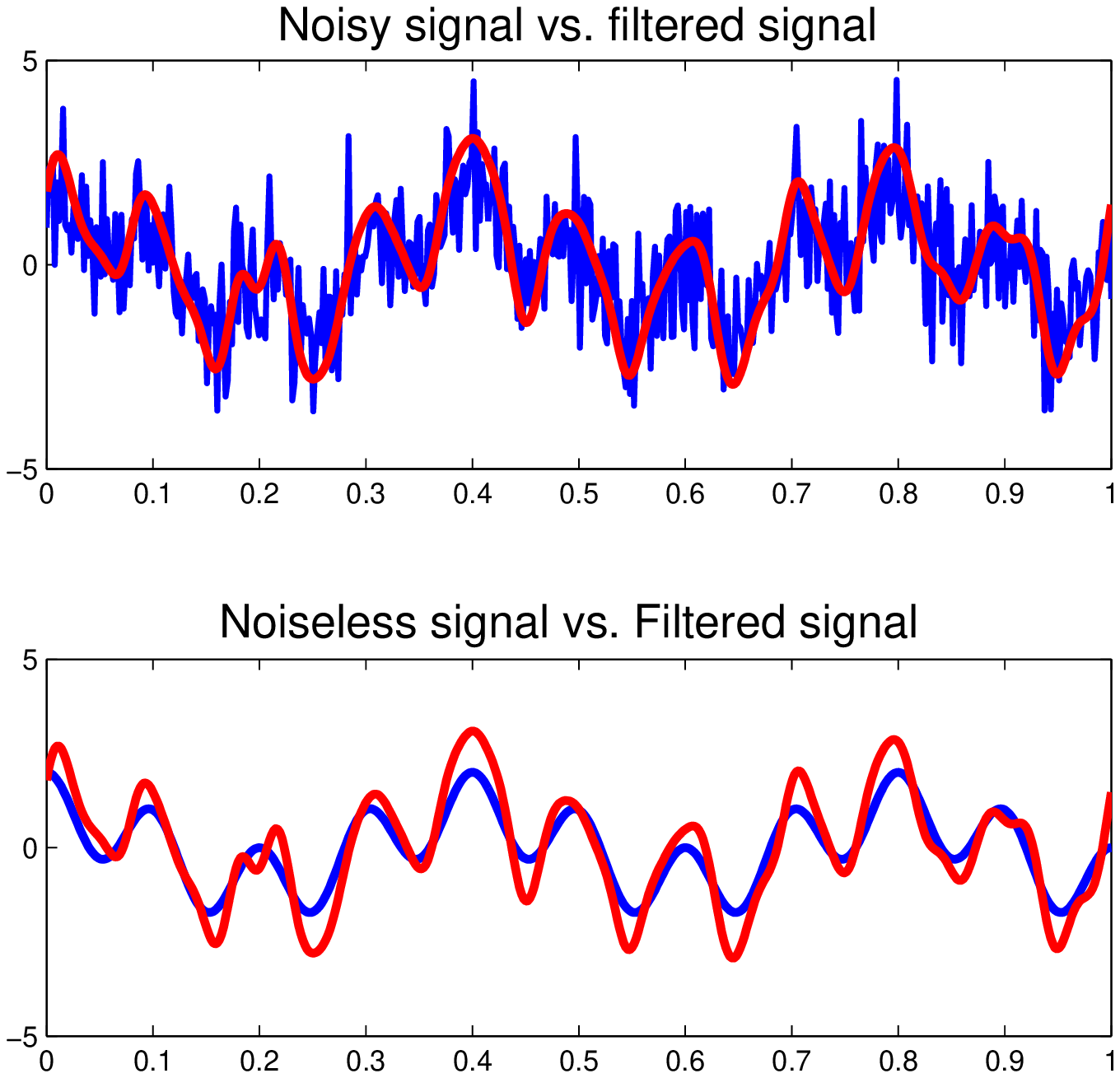} \label{sinusoidala}}
        \caption{ The red line depicts the filtered signal. In this case, we took $u_0(x) = \cos(5\pi x) + \cos(20 \pi x)$. \label{sinusoidale}
}
\end{figure} 
This statement is confirmed by Figure \ref{sinusoidala}, where the  signal has a medium SNR.
We obtain a ``good'' smoothing and an interesting amplification of the low and medium frequencies.

\section{Performance metrics \label{metrique}}

In this section, we wish to measure the denoising ability of our model.  
To evaluate our approach and compare it with the Savitzky-Golay filter, we use two measures: the Mean Square Error (MSE) and the Signal-to-Noise Ratio (SNR). These metrics are frequently used in signal processing. They are defined as follows:
\begin{equation*}
MSE=\frac{1}{N} \sum_{i=1}^{N} (u_0(i)-u(i))^2
\end{equation*}
\begin{equation*}
SNR_{db}=10 \log_{10} \left( \frac{\sum_{i=1}^{N} u_0(i)^2}{\sum_{i=1}^{N} (u_0(i)-u(i))^2}  \right)
\end{equation*}
where $u_0$ is the   noiseless original signal, $u$ is the filtered signal and $N$ is the length of the filter. \\ 
It is easy to see that a small MSE corresponds to a high noise reduction and that a large SNR indicates a good denoising. 
To compare the performance of filters, we consider a signal of trigonometric type and an ECG signal. Noise is added to these signals with SNR varying between 0 to 8. Results  are plotted in figures \ref{sinusoidalb} and \ref{ecgjustdenoising}.
For each signal, we  use a sample of 100 random noises. 
The performance of the two approaches is estimated using MSE and SNR criteria. We plot an average of  the results on  figures \ref{snrmsesinus} and \ref{snrmseecg}. Figures \ref{sinusoidalea} and \ref{ecga} show SNR values for these two methods applied to trigonometric and electrocardiogram signals. Figures \ref{sinusoidaleb} and  \ref{ecgb} show MSE values.
These values show  the high performance of nonlocal approach in signal denoising for trigonometric signals.
Remember that in this case, the algorithm used for the implementation of the equation is the FFT, which is most suitable for trigonometric  signals. We may just notice in figure \ref{snrmseecg} that for high SNR, the  Savitzky-Golay filter is better than the nonlocal filter approach. Still, when the SNR is low - and it is the critical case - the proposed method is  more efficient than the Savitzky-Golay.  Thus, the implementation of our PDE based on the FFT may not be effective for any type of signal, at least when the SNR is high. 
Alternatively, we may use the finite difference scheme.
Results  of the filtering are illustrated in figure \ref{snrmseecgfd}. As we can see, the proposed model is always more efficient than the Savitzky-Golay approach, but for low SNR  the FFT scheme remains best. Finally, regardless the type of signal considered, the fractal conservation law \eqref{EDP} is a good tool of denoising, provided that  it is implemented with the right method.        

\begin{figure}[h!]
	\centering
        \subfigure[ SNR values in Savitzky-Golay method in comparison to proposed method ]
	{\includegraphics[scale=0.3]{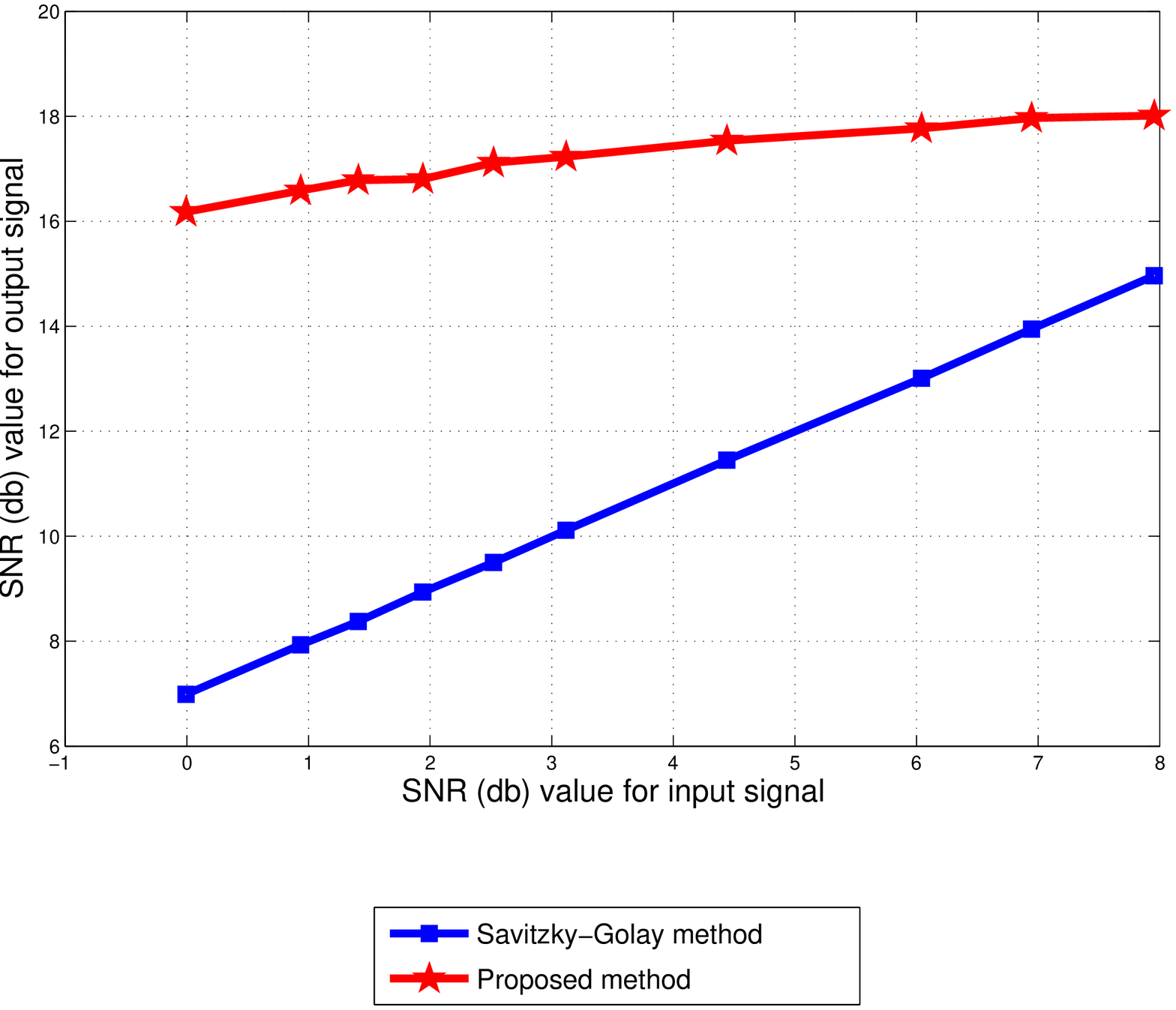}  \label{sinusoidalea} }
	\subfigure[ MSE values in Savitzky-Golay method in comparison to proposed method ]
	{\includegraphics[scale=0.3]{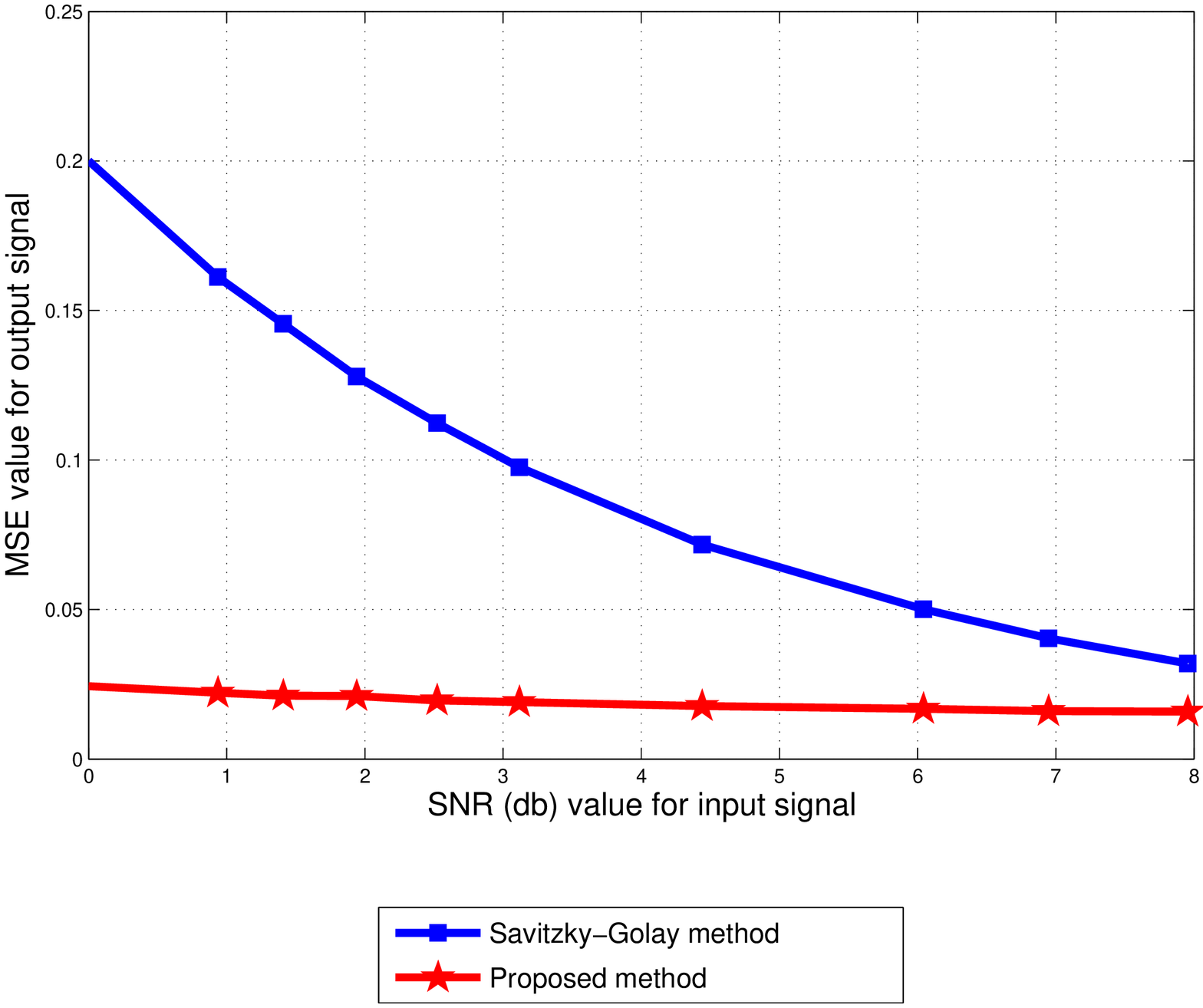} \label{sinusoidaleb} }
        \caption{ Choice of parameters: $4 \pi^2 a=0.01, b=0.03$ and $ \lambda=1.5$. The initial signal is $u_0(x) = \cos(5\pi x) + \cos(20 \pi x)$. \label{snrmsesinus}
}
\end{figure} 

\begin{figure}[h!]
	\centering
        \subfigure[ SNR values in Savitzky-Golay method in comparison to proposed method ]
	{\includegraphics[scale=0.3]{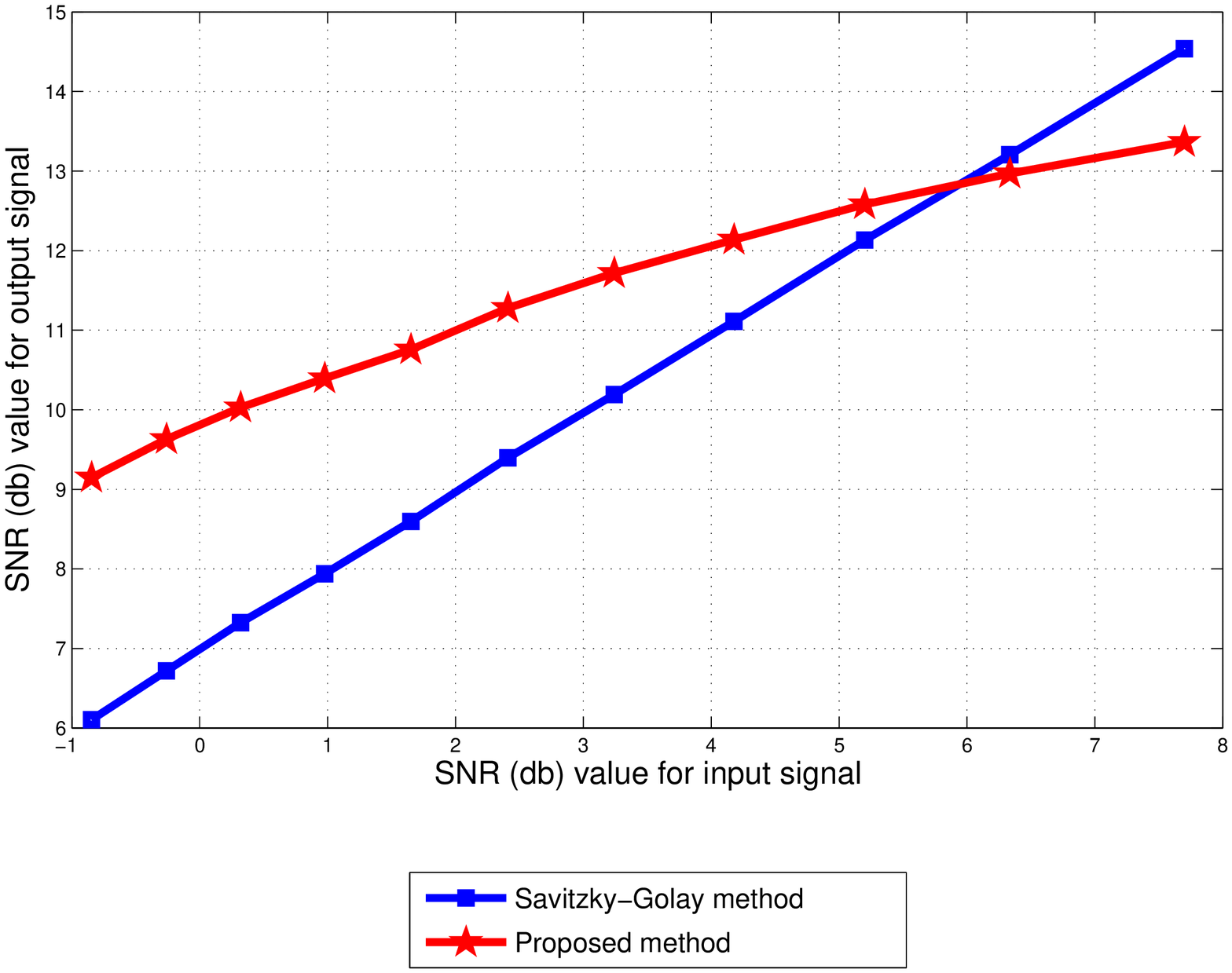}  \label{ecga}}
		\subfigure[ MSE values in Savitzky-Golay method in comparison to proposed method ]
	{\includegraphics[scale=0.3]{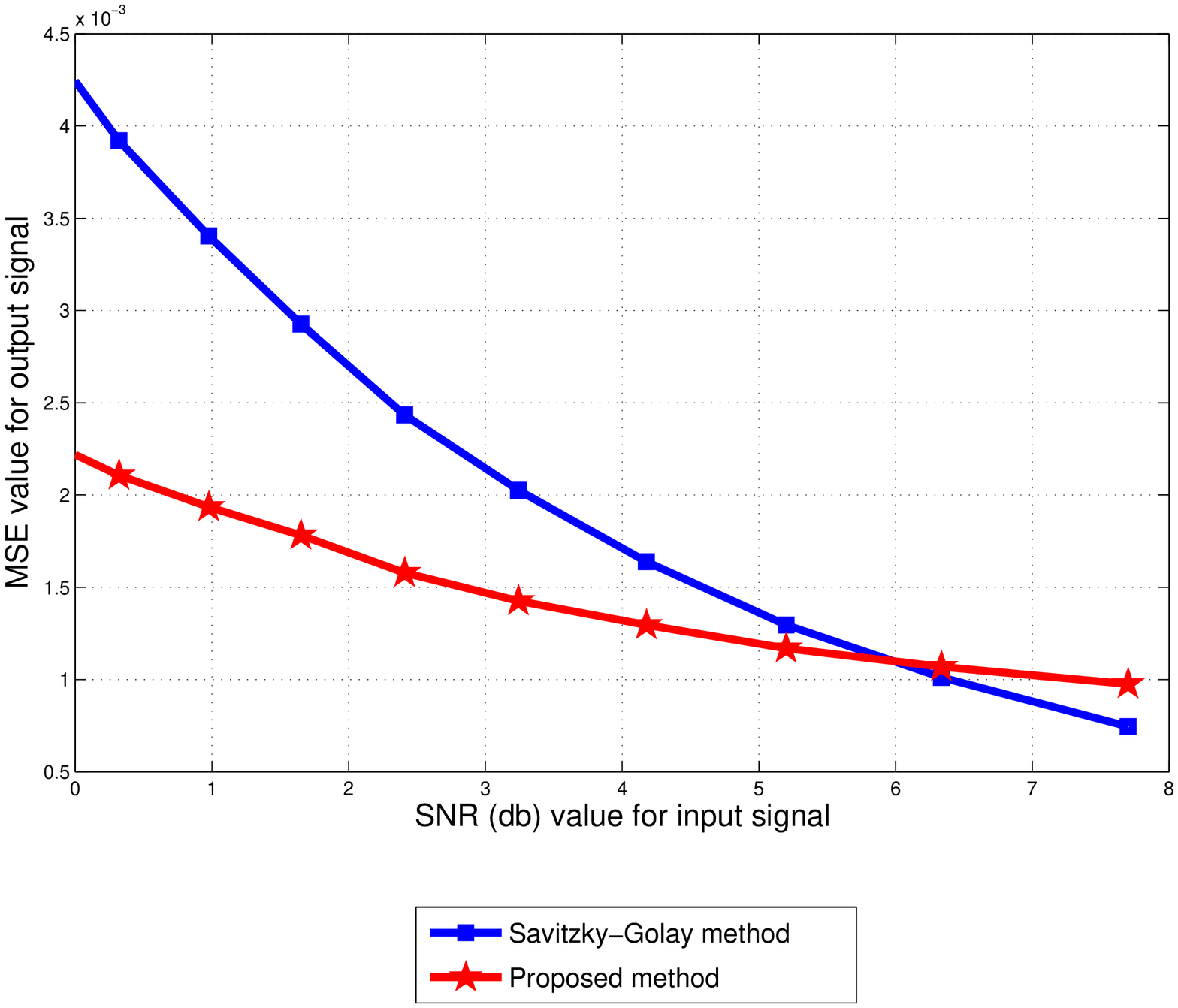} \label{ecgb}}
        \caption{ Choice of parameters: $4 \pi^2 a=0.005, b= 0.015$ and $\lambda=1.7$ . The initial signal $u_0(x)$ is an electrocardiogram (ECG) signal. \label{snrmseecg}}
\end{figure} 

\begin{figure}[h!]
	\centering
        \subfigure[ SNR values in Savitzky-Golay method in comparison to proposed method using finite difference scheme ]
	{\includegraphics[scale=0.3]{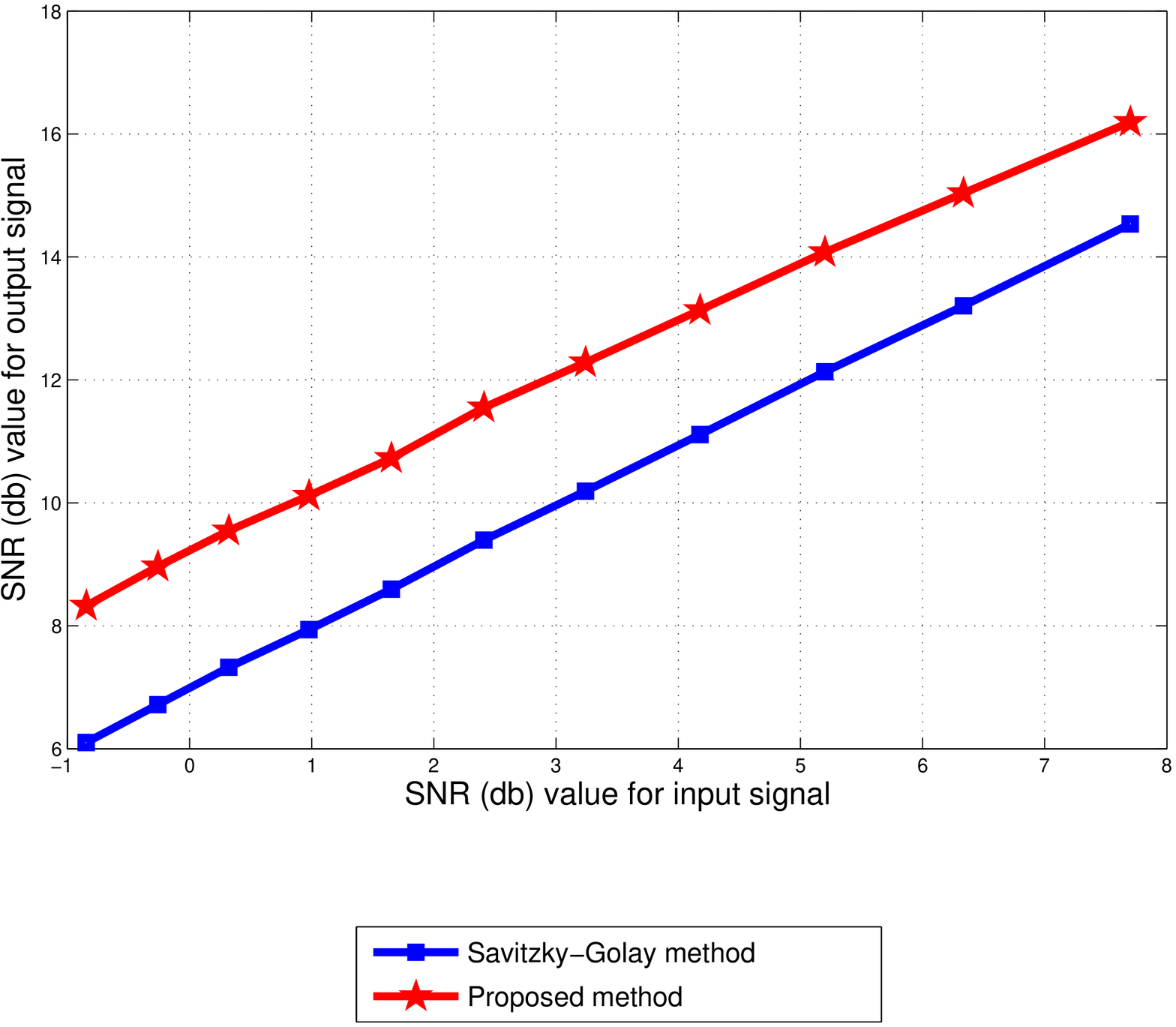}  \label{ecgdfa}}
	\subfigure[ MSE values in Savitzky-Golay method in comparison to proposed method ]
	{\includegraphics[scale=0.3]{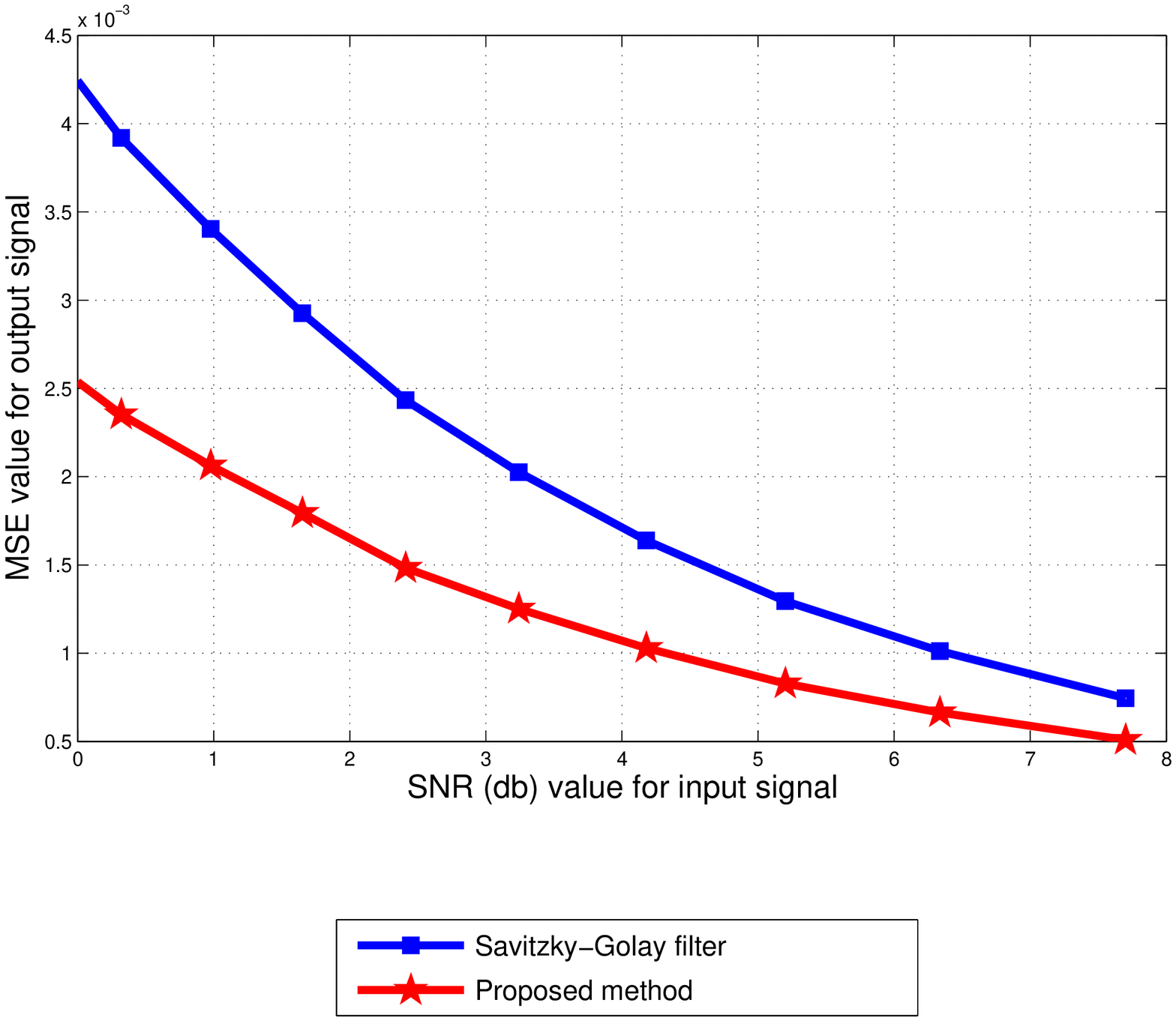} \label{ecgdfb}}
        \caption{ Choice of parameters: $a=0.5, b=0.3, \lambda=1/3$. The initial signal $u_0(x)$ is an electrocardiogram (ECG) signal. \label{snrmseecgfd}
}
\end{figure} 

\begin{figure}[h!]
	\centering
        \subfigure[ ]
	{\includegraphics[scale=0.4]{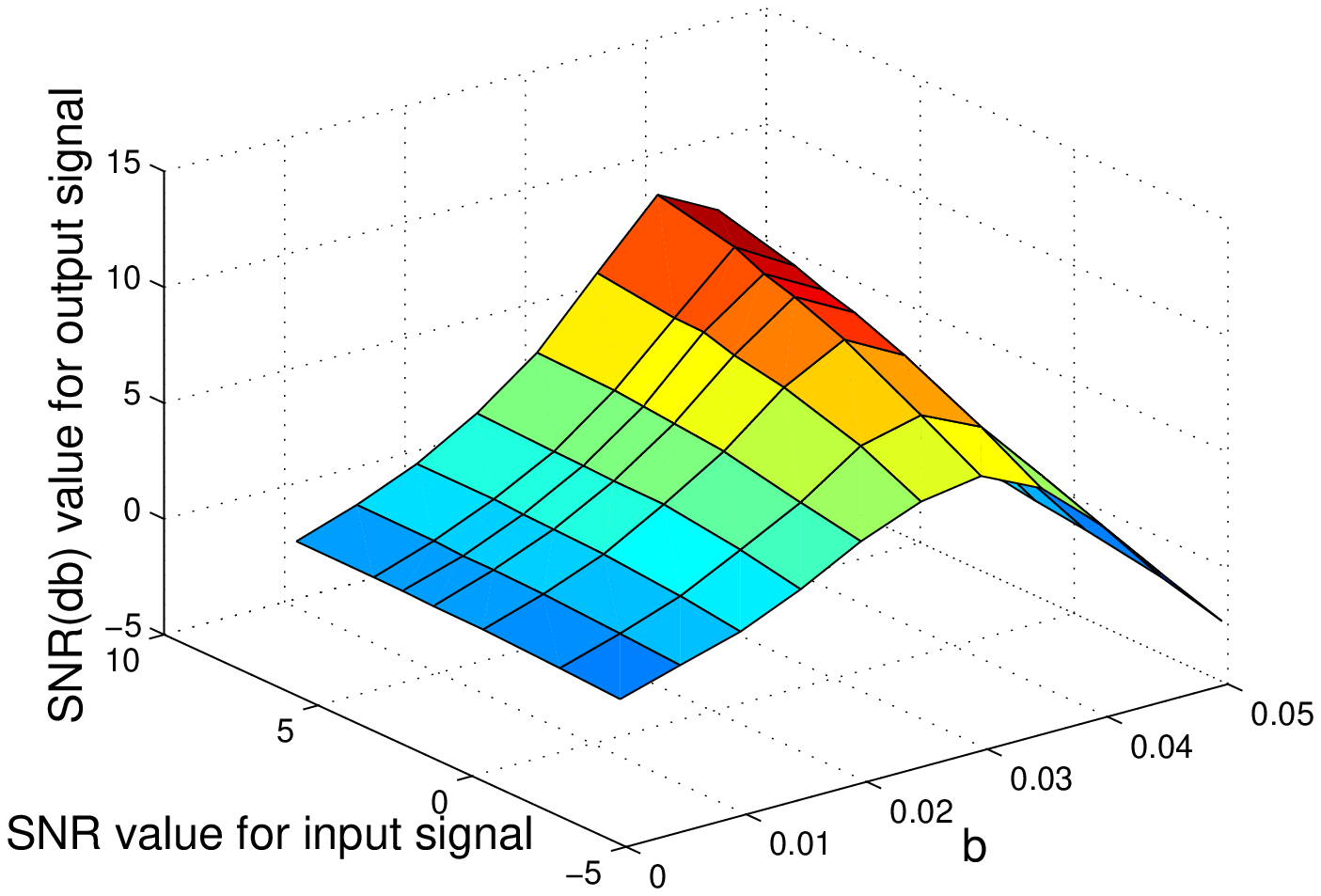}  \label{sinusoidaleaa} }
	\subfigure[  ]
	{\includegraphics[scale=0.4]{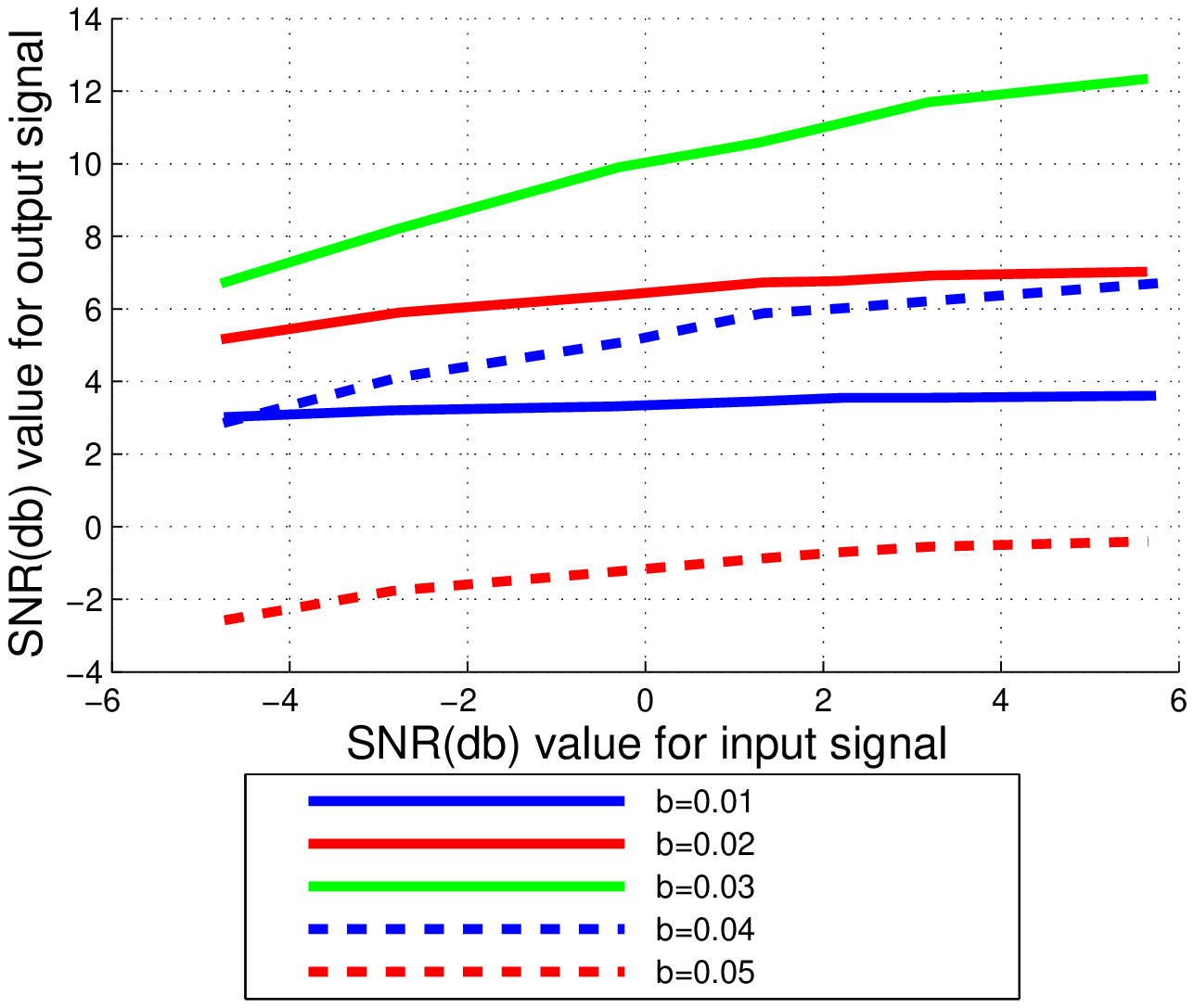} \label{sinusoidalebb} }
        \caption{ Choice of parameters: $ 4 \pi^2 a=0.01$ and  $\lambda=1.5$ . The initial signal is $u_0(x) = \cos(5\pi x) + \cos(20 \pi x)$. \label{snrmsesinusB}
}
\end{figure} 
\begin{remark}
 Let us briefly explain how SNR metrics allow us to optimize the choice of parameter $b$.
In figure \ref{snrmsesinusB}, we display the behaviour of SNR values for different values of $b$. As we can remark, the denoising will be most efficient  for $b \approx 0.03$. Hence visualization of the surface (figure \ref{sinusoidaleaa}) and curves (figure \ref{sinusoidalebb}) enables us to find  easily the parameter $b$ for optimal denoising. 
\end{remark}

\section{Concluding remarks \label{conclusion}} 

Our first aim was to introduce a fractal conservation law for denoising and contrast enhancement of signals. 
This device permits to reduce considerably the noise and to increase contrasts simultaneously. The study showed that our filter eliminates the high frequencies and amplifies the low/medium frequencies. In this paper, we also discussed the choice of parameters $a,b$ and $\lambda$. 

This equation has been implemented using both a finite difference scheme and the fast Fourier transform.  Various well-known measuring metrics have been used to compare our method with the Savitzky-Golay filter.   
Results showed the good performance of our model. Moreover, the analysis highlighted that, depending on the considered signal, it may be more suitable to use the finite difference scheme or the FFT algorithm when the SNR is high. Obviously, for a sinusoidal type signal, it is preferable to use the FFT, whereas for a signal like step functions it is better to implement the fractal equation with finite difference method. Nevertheless, no matter the algorithm used, the fractal consersation law \eqref{EDP} is an interesting and natural method for denoising and contrast enhancement. 

These satisfying properties for signal processing encourages us to implement it for image enhancement: this study is in progress.

\section{Acknowledgements} 
We thank Bijan Mohammadi for advice on numerical schemes and for helpful comments. P. Azerad and A. Bouharguane are supported by the ANR MATHOCEAN ANR-08-BLAN-0301-02.

\bibliographystyle{plain}

\end{document}